\tikzset{
  treenode/.style = {shape=rectangle, rounded corners,
                     draw, align=center,
                     top color=white, bottom color=blue!20},
  root/.style     = {treenode, font=\Large, bottom color=red!30},
  env/.style      = {treenode, font=\ttfamily\normalsize},
  dummy/.style    = {circle,draw}
}
\newtheorem{lemma}{Lemma}[section]
\newtheorem{theorem}{Theorem}[section]
\newtheorem{rmk}{Remark}[section]
\newtheorem{proposition}{Proposition}[section]
\DeclarePairedDelimiter\floor{\lfloor}{\rfloor}
\newcommand{\ep}{\varepsilon_{\mathcal{T}}}
\newcommand{\R}{\mathbb{R}}
\DeclareMathOperator*{\argmax}{arg\,max}
\DeclareMathOperator*{\argmin}{arg\,min}
\DeclareMathOperator*{\supp}{supp}
\begin{document}
%


\title{Error estimates for a tree structure algorithm solving finite horizon control problems}

\author{Luca Saluzzi$^1$}
\author{Alessandro Alla$^2$}
\author{Maurizio Falcone}

\address{Department of Mathematics, Imperial College London, South Kensington Campus, SW7 2AZ London, United Kingdom, l.saluzzi@imperial.ac.uk}
\address{Dipartimento di Scienze Molecolari e Nanosistemi, Università Ca' Foscari Venezia, Itay, alessandro.alla@unive.it}
\address{Sapienza Universit\`a  di Roma - Piazzale Aldo Moro 5, 00185 Roma, falcone@mat.uniroma1.it}

\begin{abstract}
In the Dynamic Programming approach to optimal control problems a crucial role is played by the value function that is characterized as the unique viscosity solution of a Hamilton-Jacobi-Bellman (HJB) equation. It is well known that this approach suffers from the "curse of dimensionality" and this limitation has reduced its use in real world applications. Here, we analyze a dynamic programming algorithm based on a tree structure to mitigate the "curse of dimensionality". The tree is built by the discrete time dynamics avoiding the use of a fixed space grid which is the bottleneck for high-dimensional problems, this also drops the projection on the grid in the approximation of the value function.  In this work, we present first order error estimates for the the approximation of the value function based on the tree-structure algorithm. The estimate turns out to have the same order of convergence of the numerical method used for the approximation of the dynamics. Furthermore, we analyze a pruning technique for the tree to reduce the complexity and minimize the computational effort. Finally, we present some numerical tests to show the theoretical results.

\end{abstract}
\begin{resume} 
Dans l'approche de programmation dynamique des probl\`emes de contr\^ole optimal, un r\^ole crucial est jou\'e par la fonction de valeur qui est caract\'eris\'e e comme la solution de viscosit\'e unique d'une \'equation de Hamilton-Jacobi-Bellman (HJB). Il est bien connu que cette approche souffre de la mal\'ediction de la dimensionnalit\'e et cette limitation a r\'eduit son utilisation dans les applications du monde r\'eel. Ici, nous analysons un algorithme de programmation dynamique bas\'e sur une structure arborescente. L'arbre est construit par la dynamique discr\`ete temporelle \'evitant l'utilisation d'une grille spatiale fixe qui est le goulot d'\'etranglement pour les probl\`emes de grande dimension, cela fait \'egalement tomber la projection sur la grille dans l'approximation de la fonction valeur.
  
Dans ce travail, nous pr\'esentons des estimations d'erreur de premier ordre pour l'approximation de la fonction de valeur bas\'ee sur l'algorithme de structure arborescente. L'estimation s'av\`ere avoir le m\^{e}me ordre de convergence que la m\'ethode num\'erique utilis\'ee pour l'approximation de la dynamique.
De plus, nous analysons une technique d'\'elagage de l'arbre pour r\'eduire la complexit\'e et minimiser l'effort de calcul. Enfin, nous pr\'esentons quelques tests num\'eriques pour montrer les r\'esultats th\'eoriques.
\end{resume}

\keywords{dynamic programming, Hamilton-Jacobi-Bellman equation, optimal control, tree structure, error estimates}
\subjclass{49L20, 49J15, 49J20, 93B52}

\maketitle



\section{Introduction}
The Dynamic Programming (DP) approach introduced by Bellman (see e.g. \cite{B57}) has been applied to several deterministic and stochastic optimal control problems in finite dimension. This approach has been revitalized thanks to the theory of weak solutions for Hamilton-Jacobi equations, the so-called viscosity solutions,  introduced by Crandall and Lions in the middle of the 80s (see the monographs \cite{BCD97} and \cite{FS93} and list of references therein). Despite the huge amount of theoretical results and the numerical methods devoted to develop efficient and accurate  algorithms for Hamilton-Jacobi equations, real applications of DP has been up to now limited to rather low dimensional problems.  The solution of many optimal control problems (and in particular those governed by evolutive partial differential equations) is still  accomplished  via  open-loop controls, see e.g. \cite{HPUU09}. In fact DP provides an elegant characterization of the value function as the unique viscosity solution of a nonlinear partial differential equation (the Hamilton-Jacobi-Bellman equation) which is usually computed on a space grid, this is a major bottleneck for high-dimensional problems. However, this remains an interesting and challenging problem since by an approximate knowledge of the value function one can derive a synthesis of a feedback control law that can be plugged into the controlled dynamics. This remarkable feature of DP allows for a synthesis that can be applied  to control problems with non linear dynamics and running costs. The case of a linear dynamics and quadratic costs (LQR problem) has an explicit solution based on the Riccati equation (we refer to the book \cite{BIM12} for a general introduction to numerical methods for the Riccati equation). It is interesting to note that this equation can be solved even in very high-dimensional spaces as in \cite{SSM14, S16} using Krylov subspaces. We also refer to the recent paper \cite{BBKS18} for  a comparison of various techniques.

As we said,  DP suffers from the {\em curse of dimensionality} and even in low dimension an accurate approximation of viscosity solutions  is a challenging problem due to their lack of regularity (the value function is in general just Lipschitz continuous even for regular dynamics and costs).  However, the analysis of low order numerical methods is now rather complete even for a state space in $\R^d$ and several methods to solve HJB equations are available (see the monographies by Sethian \cite{S99}, Osher and Fedkiw \cite{OF03}, Falcone and Ferretti \cite{FF13} for an extensive discussion of some of these methods). From the practical point of view all the classical  PDE methods require a space discretization based on a space grid (or triangulation) and this implies a huge amount of memory allocations for high dimensional problems and makes the problem unfeasible for a dimension $d>5$ on a standard computer. For some of these methods, a-priori error estimates are available, in particular the construction of a DP algorithm for  time dependent problems has been addressed in \cite{FG99}.

Several efforts have been made to mitigate the {\em curse of dimensionality} for the numerical solutions of deterministic optimal control problems.  Although a detailed description of these contributions goes beyond the scope of this paper, let us briefly mention for the sake of completeness other approaches that have been developed: domain decomposition  \cite{FLS94,NK07,CCFP12, F18}, max-plus algebras \cite{ME07,ME09, AGL08}, Model Predictive Control \cite{ GP11, G19}, Hopf-Lax representation formulas for Hamilton--Jacobi equations  \cite{DO16a, DO16b} and characteristics based methods \cite{YDG18}.

Recently deep learning techniques have been introduced for the approximation of the HJB equaitons. We mention that in \cite{DO16a, DO16b} has been proposed to apply a discrete version of Hopf-Lax representation formulas for Hamilton-Jacobi equations avoiding its global approximation on a grid. The advantage of this method is that it can
be applied at every point in the space and that it can be easily parallelized. However,
this method can not be used for general nonlinear control problems since the Hopf-Lax
representation formula is valid only for hamiltonians of the form $H(Du)$ whereas the
hamiltonians related to general optimal control problems are of the form $H(x; u;Du)$
(see e.g. \cite{BCD97}). Neural networks have also been used in e.g. \cite{HJW18} for general nonlinear PDEs with application to HJB equation too. Finally, we mention the recent papers \cite{HPBL21,BHLP22} where the authors present an hybrid approach based on Deep Neural Networks to solve stochastic control problems. They first approximate the optimal policy by means of neural networks and then the value function by Montecarlo regression.


 In the framework of optimal control problems an efficient acceleration technique based on the coupling between value and policy iterations has been proposed and studied in \cite{AFK15}. Although domain decomposition coupled with acceleration techniques can help to solve problems up to dimension $10$ we cannot solve problems beyond this limit with a direct approach (the recent application to a landing problem with state constraints in \cite{ABDZ18} shows the actual dimensional limitations of this approach). One way to attack high-dimensional problems is to apply  first a model order reduction technique (e.g. Proper Orthogonal Decomposition \cite{V13}) to have a low dimensional version of the dynamics by orthogonal projections. Thus, if the reduced system of coordinates for the dynamics has a reasonably low number of dimensions (e.g. $d\approx 5$) the problem can be solved via the DP approach. We refer to the pioneering  work on the coupling between model reduction and HJB approach \cite{KVX04} and to the recent work \cite{AFV17} that provides a-priori error estimates for the aforementioned coupling method. Another tentative has been made using a sparse grid approach in \cite{GK17}, there the authors apply HJB to the control of the wave equation and a spectral elements approximation in \cite{KK18} which allows to solve the HJB equation up to dimension $12$. More recently, in \cite{DKK19,DKS22} a tensor decomposition has been introduced to approximate the HJB equation.

In this paper we will analyze the method to mitigate the curse of dimensionality originally proposed in \cite{AFS18}, based on a tree structure algorithm which does not require a spatial discretization of the problem. In this way there is no need to store the nodes of the grid/triangulation of the computational domain.  The same time discretization has been proposed by Capuzzo Dolcetta in \cite{CD83} for the infinite horizon problem and it has been exploited in \cite{F84} for numerical purposes in combination with a grid projection via polynomial  interpolation. Here we are going to abandon the space grid interpolation to exploit  the tree structure based on the time discretization. Note that the tree strongly depends on the vector field, the number of steps used for the time discretization and the cardinality of the control set. Thus, the time discretization can result in a huge number of branches making the numerical approximation unfeasible. The crucial observation is that  not all the tree branches must be considered to get an accurate approximation of the value, with the help of a pruning  rule we can drastically reduce the complexity of the algorithm and finally solve the discrete time problem without projecting on a grid. The method applies to general nonlinear finite horizon optimal  control problems without additional assumptions on the structure of the dynamical system and it has been naturally extended  \cite{AFS18b}  to get high-order accurate approximations of the value function. Moreover, the TSA has been coupled in \cite{AS19} with a model order reduction technique based on Proper Orthogonal Decomposition (POD) to deal with optimal control problems driven by two dimensional nonlinear PDEs whose discretization produces a dynamical system of very high dimension (order of thousands). Finally, in \cite{AFS20} the authors present an extension of the TSA to problems with state constraints, together with a convergence result for the value function with convex constraints.

Our main contribution here is the precise error analysis of the TSA  developed in Section 3 and of the pruning technique. Note that the pruning technique is crucial to reduce the complexity of the algorithm and to attack problems in very-high dimension (a couple of examples are given here and others have been presented in \cite{AFS18}). We improve the order of convergence provided in \cite{FG99} for the finite horizon optimal control problem and we extend the error analysis to the pruned TSA keeping the same order of convergence under the semiconcavity assumption on the vector field and the costs.  Similar estimates for discrete time approximations of the infinite horizon optimal control problems can be found in \cite{CI84} where a first order approximation is proved under  semiconcavity assumptions, whereas high-order time approximations for the infinite horizon problem are presented in \cite{FF94}.

The paper is organized as follows: in Section~\ref{sec:tsa} we recall some basic facts about the discrete time approximation of the finite horizon problem via the DP approach and we present the construction of the tree-structure related to the controlled dynamics. Section~\ref{sec:eee} contains the main results,  in particular  a-priori error estimates for the first order approximation. A subsection is devoted to the analysis of the error for the pruning technique used to cut off the branches of the tree in order to reduce the global complexity of the algorithm (an extension to high-order time approximations is presented in \cite{AFS18b}).  Some numerical tests are presented and analyzed in Section~\ref{sec:test}. We give our conclusions and perspectives in Section \ref{sec:con}.

\section{Dynamic Programming on a Tree Structure}
\label{sec:tsa}
This section is devoted to the essential features of the dynamic programming approach and its numerical approximation. The interested reader will find in \cite{AFS18} more details on the tree structure algorithm.\\
Let us consider the classical {\it finite horizon problem}. Let the system be driven by
\begin{equation}\label{eq}
\left\{ \begin{array}{l}
\dot{y}(s)=f(y(s),u(s),s), \;\; s\in(t,T],\\
y(t)=x\in\R^d.
\end{array} \right.
\end{equation}
We will denote by $y:[t,T]\rightarrow\R^d$ the trajectory, by $f:\R^d\times\R^m\times[t,T]\rightarrow\R^d$ the vector field, by $u:[t,T]\rightarrow\R^m$ the control and by
\[\mathcal{U}=\{u:[t,T]\rightarrow U, \mbox{measurable} \}
\]
the set of admissible controls where $U\subset \R^m$ is a compact set. 
We assume that for each $u\in\mathcal{U}$ there exists a unique solution for \eqref{eq}. 

The cost functional for the finite horizon optimal control problem is given by 
\begin{equation}\label{cost}
 J_{x,t}(u):=\int_t^T L(y(s,u),u(s),s)e^{-\lambda (s-t)}\, ds+g(y(T))e^{-\lambda (T-t)},
\end{equation}
where $L:\R^d\times\R^m\times [t,T]\rightarrow\R$ is the running cost and $\lambda\geq0$ is the discount factor. In the present work we will assume that the functions $f,L$ and $g$ are continuous in all the variables and bounded:
 \begin{align}
 \begin{aligned}\label{Mf}
|f(x,u,s)|& \le M_f,\quad |L(x,u,s)| \le M_L,\quad |g(x)| \le M_g, \cr
&\forall\, x \in \mathbb{R}^d, u \in U \subset \mathbb{R}^m, s \in [t,T], 
\end{aligned}
\end{align}
the functions $f$ and $L$ are Lipschitz-continuous with respect to the first variable
\begin{align}
\begin{aligned}\label{Lf}
&|f(x,u,s)-f(y,u,s)| \le L_f |x-y|, \quad |L(x,u,s)-L(y,u,s)| \le L_L |x-y|,\cr
&\qquad\qquad\qquad\qquad\forall \, x,y \in \mathbb{R}^d, u \in U \subset \mathbb{R}^m, s \in [t,T], 
\end{aligned}
\end{align}
%
and the cost $g$ is also Lipschitz-continuous:
\begin{equation}
|g(x)-g(y)| \le L_g |x-y|, \quad \forall x,y \in \mathbb{R}^d.
\label{Lg}
\end{equation}
In the sequel, we will also need to assume the semiconcavity of the functions $L$ and $g$:
\begin{equation}\label{L_con}
L(x+h, t+ \tau, u) -2L(x,t,u) + L(x-h,t-\tau,u)\leq C_L(|h|^2+\tau^2), \quad \forall x \in \mathbb{R}^d,\forall h, \tau >0,
\end{equation}
\begin{equation}\label{g_con}
g(x+h) -2g(x) + g(x-h)\leq C_g|h|^2,\quad \forall x \in \mathbb{R}^d,\forall h>0 \,,
\end{equation}
and a stronger assumption for the function $f$:
\begin{equation}\label{Cf}
|f(x+z,u,t+\tau)-2f(x,u,t)+f(x-z,u,t-\tau) | \le C_f (|z|^2+ \tau^2),\quad
 \forall u \in U, \;\forall x,z \in \mathbb{R}^d,\, \forall t,\tau >0\,.
\end{equation}
Conditions \eqref{L_con}-\eqref{g_con} provide an upper bound for a discrete version of the second order derivative. Furthermore, it can be proved that \eqref{L_con} and \eqref{g_con} are equivalent to the boundedness of the second order derivative in the sense of the distribution (we refer to \cite{CS04} for further details). Condition \eqref{Cf} is a stronger hypothesis since it adds a lower bound to the previous ones.

To derive optimality conditions we use the well-known Dynamic Programming Principle (DPP) due to Bellman. We first define the value function for an initial condition $(x,t)\in\R^d\times [t,T]$:
\begin{equation}
v(x,t):=\inf\limits_{u\in\mathcal{U}} J_{x,t}(u)
\label{value_fun}
\end{equation}
which satisfies the DPP, i.e. for every $\tau\in [t,T]$:
\begin{equation}\label{dpp}
v(x,t)=\inf_{u\in\mathcal{U}}\left\{\int_t^\tau L(y(s),u(s),s) e^{-\lambda (s-t)}ds+ v(y(\tau),\tau) e^{-\lambda (\tau-t)}\right\}.
\end{equation}
Due to \eqref{dpp} we can derive the HJB equation for every $x\in\R^d$, $s\in [t,T)$: 
\begin{equation}\label{HJB}
\left\{
\begin{array}{ll} 
&-\dfrac{\partial v}{\partial s}(x,s) +\lambda v(x,s)+ \max\limits_{u\in U }\left\{-L(x, u,s)- \nabla v(x,s) \cdot f(x,u,s)\right\} = 0, \\
&v(x,T) = g(x).
\end{array}
\right.
\end{equation}
Once the value function is known, by e.g. \eqref{HJB}, then it is possible to compute the optimal feedback control as:
\begin{equation}\label{feedback}
u^*(t):=  \argmax_{u\in U }\left\{-L(x,u,t)- \nabla v(x,t) \cdot f(x,u,t)\right\},
\end{equation}
where \eqref{feedback} has to be understood in an a.e. sense because viscosity solutions are Lipschitz continuous (see e.g. \cite{BCD97}).
\subsection{Numerical approximation for HJB equation on a tree structure}\label{sec2.1}
Equation \eqref{HJB} is a nonlinear PDE of the first order which is hard to solve analitically. However, several numerical methods, such as e.g. finite difference or semi-Lagrangian schemes, are available to approximate the solution. 
In the present work we recall the semi-Lagrangian method on a tree structure based on the recent work \cite{AFS18}. Fixed $\overline{N}$ as the number of temporal time steps, we introduce the semi-discrete problem with a time step $\Delta t: = (T-t)/\overline N$:
\begin{equation}
\left\{\begin{array}{ll}\label{SL}
V^{n}(x)=\min\limits_{u\in U}\left\{\Delta t\,L(x, u, t_n)+e^{-\lambda \Delta t}V^{n+1}(x+\Delta t f(x, u, t_n))\right\},\qquad n= \overline{N}-1,\dots, 0,\\
V^{\overline{N}}(x)=g(x),\qquad\qquad\quad\qquad\qquad\qquad\qquad\qquad\qquad\qquad\qquad\qquad x \in \R^d,
\end{array}\right.
\end{equation}
where $t_n=t+n \Delta t,\, t_{\overline N} = T$ and $V^n(x):=V(x, t_n).$ For the sake of completeness we would like to mention that a fully discrete approach is typically based on a time discretization which is projected on a fixed state-space grid of the numerical domain, see e.g. \cite{FG99}. The current work aims to provide error estimates for the algorithm proposed in \cite{AFS18}. 

For the reader's convenience we recall the tree structure algorithm. Let us consider a finite number of admissible controls $\{u_1,...,u_M \}$, obtained discretizing the control domain $U\subset\mathbb{R}^m$ with step-size $\Delta u$. A typical example is when $U$ is an hypercube, which is discretized in all the directions with constant step-size $\Delta u$, obtaining the finite set $U^{\Delta u}=\{u_1,...,u_M \}$. To simplify the notations in the sequel we continue to denote by $U$ the discrete set of controls. We will denote the tree by $\mathcal{T}:=\cup_{j=0}^{\overline{N}} \mathcal{T}^j,$ where each $\mathcal{T}^j$ contains the nodes of the tree correspondent to time $t_j$. The first level $\mathcal{T}^0 = \{x\}$ is clearly given by the initial condition $x$. Starting from the initial condition $x$, we compute all the nodes given by the dynamics \eqref{eq} discretized by e.g. an explicit Euler scheme with different discrete controls $u_j \in U $
$$\zeta_j^1 = x+ \Delta t \, f(x,u_j,t_0),\qquad j=1,\ldots,M.$$ Therefore, we have $\mathcal{T}^1 =\{\zeta_1^1,\ldots, \zeta^1_M\}$. We observe that all the nodes can be characterized by their $n-$th {\em time level}, as follows
$$\mathcal{T}^n = \{ \zeta^{n-1}_i + \Delta t f(\zeta^{n-1}_i, u_j,t_{n-1}),\, j=1, \ldots, M,\,i = 1,\ldots, M^{n-1}\}.$$

We are considering an Euler approximation of the dynamical system to simplify the presentation, but the algorithm can be extended to high-order approximations, as illustrated in \cite{AFS18b}.
All the nodes of the tree can be briefly denoted as
$$\mathcal{T}:= \{ \zeta_j^n,\,  j=1, \ldots, M^n,\, n=0,\ldots, \overline{N}\},$$ 
where the nodes $\zeta^n_i$ are the results of the discrete dynamics at time $t_n$ with the controls $\{u_{j_k}\}_{k=0}^{n-1}$:
$$\zeta_{i_n}^n = \zeta_{i_{n-1}}^{n-1} + \Delta t f(\zeta_{i_{n-1}}^{n-1}, u_{j_{n-1}},t_{n-1})= x+ \Delta t \sum_{k=0}^{n-1} f(\zeta^k_{i_k}, u_{j_k},t_k), $$
with $\zeta^0 = x$, $i_k = \floor*{\dfrac{i_{k+1}}{M}}$ and $j_k\equiv i_{k+1} \mbox{mod } M$ and $\zeta_i^k \in \R^d, i=1,\ldots, M^k$. The notation $\floor*{\cdot}$ represents the floor function.

Although theoretically the tree structure allows to solve high dimensional problems, its construction might be expensive due to the huge amount of memory allocations, since $ \mathcal{T} =O( M^{\overline{N}}),$
where $M$ is the number of controls and $\overline{N}$ the number of time steps.
For this reason we are going to introduce the following pruning criterion:
two given nodes $\zeta^n_i$ and $\zeta^n_j$ will be merged if 
\begin{equation}\label{tol_cri}
\Vert \zeta^n_i-\zeta^n_j \Vert \le \ep, \quad \mbox{ with }i\ne j \mbox{ and } n = 0,\ldots, \overline{N}, 
\end{equation}
for a given threshold $\ep>0$. Criterion \eqref{tol_cri} will be useful in order to save a huge amount of memory. The selection is made on the fly and in case of two nodes within a distance $\ep$, we keep the first node computed, e.g. if $\zeta^n_i$ and $\zeta^n_j$ satisfy \eqref{tol_cri} with $i<j$, we will neglect the node $\zeta^n_j$. Later, we will show a result on the threshold $\ep>0$ in order to guarantee first order convergence.

Once the tree $\mathcal{T}$ has been built, the numerical value function $V(x,t)$ will be computed on the tree nodes and we will denote by $V^n(x)$ the value function computed in $x$ at time $t_n=t+ n \Delta t$. 
The computation of the value function is now straightforward. The TSA defines a time dependent grid $\mathcal{T}^n=\{\zeta^n_j\}_{j=1}^{M^n}$ for $n=0,\ldots, \overline{N}$ and
we can approximate \eqref{dpp} as follows: 
\begin{equation}
\begin{cases}
V^{n}(\zeta^n_i)= \min\limits_{u\in U} \{e^{-\lambda \Delta t} V^{n+1}(\zeta^n_i+\Delta t f(\zeta^n_i,u,t_n)) +\Delta t \, L(\zeta^n_i,u,t_n) \}, \qquad \zeta^n_i \in \mathcal{T}^n\,, n = \overline{N}-1,\ldots, 0, \\
V^{\overline{N}}(\zeta^{\overline{N}}_i)= g(\zeta_i^{\overline{N}}), \qquad\qquad\quad\qquad\qquad\qquad\qquad\qquad \qquad\qquad \qquad\qquad   \zeta_i^{\overline{N}} \in \mathcal{T}^{\overline{N}}.
\end{cases}
\label{HJBt2}
\end{equation}

The minimization in \eqref{HJBt2} is computed by comparison on the set of discrete controls $U$. 

Once the value function is computed on the nodes of the tree, it is possible to consider a post-processing procedure to achieve a feedback control. We consider the formula for the synthesis of the feedback control
$$
u_*^n(x)=  \argmin_{u\in U }\{e^{-\lambda \Delta t} I_{\mathcal{T}^{n+1}}[V^{n+1}](x+\Delta t f(x,u,t_n)) +\Delta t \, L(x,u,t_n) \}
$$
where $I_{\mathcal{T}^{n+1}}[V^{n+1}]$ is an interpolation operator based on the scattered data $\left(\mathcal{T}^{n+1},V^{n+1}(\mathcal{T}^{n+1})\right)$. A detailed analysis of the computational methods to approximate the feedback control goes beyond the scopes of this work. The interested reader will find further information about this procedure in Chapter 3.1 in \cite{S20} or \cite{AS22}. We note that in general it is possible to obtain a control in feedback form on the nodes of tree without the application of interpolation operators.
Finally, we would like to mention that a detailed description and comparison about the classical method and tree structure algorithm can be found in \cite{AFS18}.

\begin{rmk}[Efficient Pruning]
\label{efficient}
The pruning criterion \eqref{tol_cri} could result in a very expensive algorithm, especially when we deal with high dimensional dynamics. This is the case of a semidiscretization of an evolutive PDE where the dimension easily reaches thousands of unknown variables. To speed up the pruning in this case, we will consider an orthogonal projection $\mathcal{P}$ of the data onto a lower dimensional space which can capture the main features of the dynamics. This can be obtained, for instance, by a Singular Value Decomposition of a matrix containing some snapshots taken from a coarse approximation of the tree. This turns out to accelerate the algorithm, as shown in Section \ref{sec:test}. We refer to \cite{AFS18,AS19} for more details on this technique in different contexts. Since the operator $\mathcal{P}$ is a non-expansive operator, $i.e.$
\begin{equation}
\Vert \mathcal{P} x -\mathcal{P} y \Vert \le \Vert x - y \Vert \quad \forall x,y \in \mathbb{R}^d.
\label{boundP}
\end{equation}
Hence, if two nodes satisfy the pruning criterion \eqref{tol_cri}, their projections satisfy the same condition due to the inequality \eqref{boundP} . Thus, it will be sufficient to collect all the nodes fulfilling the pruning criterion in the projected space and to check the same criterion in the original dimension for the selected nodes.
\end{rmk}

\section{Error estimate for TSA with Euler discretization}
\label{sec:eee}
In this section we will provide an error analysis for the TSA. We denote $y(s)$ as the exact continuous solution for \eqref{eq} and whenever we want to stress the dependence on the control $u$, the initial condition $x$ and initial time $t$ we write $y(s; u, x, t)$. We further define $y^n(u)$ as its numerical approximation by an explicit Euler scheme at time $t_n$. We will consider the piecewise constant extension $\tilde{y}(s;u)$ of the approximation such that

\begin{equation}\label{apy}
\tilde{y}(s,u):={y}^{[s/ \Delta t]}(u),\qquad s \in [t,T], u\in \mathcal{U}^\Delta,
\end{equation}
where $[\cdot]$ stands for the integer part and
$$
\mathcal{U}^\Delta =\{ u:[t,T) \rightarrow U, \mbox{ such that } u(s)= \sum_{k=n}^{\overline{N}-1} u^k \chi_{[t_k,t_{k+1})}(s) \}.
$$
 Let us now consider the discretized version of the cost functional \eqref{cost}:
\begin{align*}
\begin{aligned}
&J^{\Delta t}_{x,s}(u)=(t_{n+1}-s) L(x, u, s)+ \Delta t \sum_{k=n+1}^{\overline{N}-1} L(y^k, u^k, t_k) e^{-\lambda (t_k-s)} + g(y^{\overline{N}})e^{-\lambda (t_N-s)}\\
&= \int_{s}^T L(\tilde{y}(\sigma,u;x,s),u(\sigma),{\floor*{\frac{\sigma}{\Delta t}}}\Delta t) e^{-\lambda \left({\floor*{\frac{\sigma}{\Delta t}}}\Delta t-s\right)} d\sigma + g\left(\tilde{y}(T,u;x,s)\right)e^{-\lambda (T-s)}
\end{aligned}
\end{align*}
for $s\in [t_n,t_{n+1})$. We define the discrete value function as
$$
V(x,t):=\inf_{u \in \mathcal{U}^\Delta} J_{x,t}^{\Delta t}(u)
$$
which can be computed by the backward problem 
\begin{align}\label{HJBt3}
\begin{aligned}
&V(x,s) = \min\limits_{u\in U} \{e^{-\lambda (t_{n+1}-s)} V(x+(t_{n+1}-s) f(x,u,s), t_{n+1}) + (t_{n+1}-s) \, L(x,u,s) \}, \\
&V(x,T) = g(x), \hspace{6cm} x \in \mathbb{R}^d, s \in [t_n,t_{n+1}).
\end{aligned}
\end{align}

The aim of this section is to find a priori error estimates for the tree algorithm and show the rate of convergence of the approximation $V$. We will show that if the dynamics is discretized by forward Euler method the error is $O(\Delta t)$:
\begin{equation}\label{eq:res}
\sup_{(x,t)\in\R^d \times [0,T]}| v(x,t) - V(x,t) | \le \widehat C (T) \Delta t
\end{equation}
where $\Delta t$ is the time discretization of \eqref{eq} and $v$ is the exact solution \eqref{value_fun}. We remark that the estimate guarantees the same order of convergence of the discretization scheme for the dynamical system \eqref{eq}. To simplify the proof of the main result \eqref{eq:res}  we have splitted the proof into two parts (see Theorem \ref{unverso} and Theorem \ref{secondoverso}). We note that this result improves the estimate in \cite{FG99} under the semiconcavity assumption and it is in line with a similar result for the infinite horizon problem in \cite{CI84}.
To begin with, we show some estimates for the Euler scheme which will be useful to prove the error estimates for TSA. The proposition below follows directly from Gr\"onwall's lemma and its discrete version.
\begin{proposition}\label{prop1}
Let us consider the exact solution trajectory $y(s; u, x, t)$ and its approximation $\tilde{y}(s; u, x, t)$ of \eqref{eq} for a given admissible control $u \in  \mathcal{U}^\Delta$. Furthermore, let us assume that assumptions \eqref{Mf} and \eqref{Lf} hold true. We then obtain the following estimates applying the Euler scheme to \eqref{eq}:
\begin{align}
|y(s; u, x, t)-\tilde{y}(s; u, x, t)| \le M_f \Delta t e^{L_f(s-t)},\label{euler1}\\
\begin{aligned}\label{euler2}
|\tilde{y}(s; u, x+z, t+\tau)-\tilde{y}(s; u, x, t)| \le (|z|+M_f \tau) (1+L_f \Delta t)^{n-k} \cr
 s \in [t_n, t_{n+1}) \mbox{ and }t+\tau\in [t_k,t_{k+1}) \mbox{ with }\tau \ge 0, \quad s \ge t + \tau.
 \end{aligned}
\end{align}
\end{proposition}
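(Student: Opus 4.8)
The plan is to obtain both estimates from the one-step structure of the explicit Euler scheme combined with the continuous and discrete Gr\"onwall inequalities, exactly as announced before the statement. Throughout I would exploit that $u\in\mathcal{U}^\Delta$ is piecewise constant, so on each $[t_n,t_{n+1})$ the dynamics \eqref{eq} is driven by a single frozen value $u^n$. For \eqref{euler1} I would compare the exact trajectory with its Euler approximation at the grid nodes and then account separately for the piecewise-constant extension inside each interval; for \eqref{euler2} I would run two Euler trajectories with the same control but shifted initial position and initial time, and track how their distance is amplified step by step.

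For \eqref{euler1} I would first write the exact solution in integral form, $y(t_{n+1})=y(t_n)+\int_{t_n}^{t_{n+1}}f(y(\sigma),u^n,\sigma)\,d\sigma$, and subtract the Euler update $y^{n+1}=y^n+\Delta t\,f(y^n,u^n,t_n)$. Setting $g_n:=|y(t_n)-y^n|$, inserting $f(y(t_n),u^n,\sigma)$ and using the spatial Lipschitz bound \eqref{Lf} together with $|y(\sigma)-y(t_n)|\le M_f\Delta t$ (which follows from $|\dot y|=|f|\le M_f$ by \eqref{Mf}) would give a recursion of the form $g_{n+1}\le(1+L_f\Delta t)g_n+L_f M_f(\Delta t)^2$ with $g_0=0$. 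Summing the geometric series (the discrete Gr\"onwall step) yields $g_n\le M_f\Delta t\bigl(e^{L_f(t_n-t)}-1\bigr)$. Finally, for $s\in[t_n,t_{n+1})$ the extension satisfies $\tilde{y}(s)=y^n$ while $|y(s)-y(t_n)|\le M_f\Delta t$, so combining the two contributions produces $|y(s)-\tilde{y}(s)|\le M_f\Delta t\,e^{L_f(t_n-t)}$, which is \eqref{euler1}. The delicate point here is the frozen time $t_n$ inside $f$: since $f$ is only assumed Lipschitz in space, I would have to bound the residual time-freezing term crudely and absorb it, so that only $M_f$ and $L_f$ survive in the constant.

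For \eqref{euler2} I would consider the two Euler trajectories $\tilde{y}_A:=\tilde{y}(\cdot;u,x,t)$ and $\tilde{y}_B:=\tilde{y}(\cdot;u,x+z,t+\tau)$ sharing the control $u$. The base of the estimate is the discrepancy on the interval containing the later starting time $t+\tau\in[t_k,t_{k+1})$: there $\tilde{y}_A$ has moved from $x$ by at most $M_f\,k\Delta t\le M_f\tau$ (by \eqref{Mf} and $k\Delta t\le\tau$), while $\tilde{y}_B$ still sits at $x+z$, so the initial gap is bounded by $|z|+M_f\tau$. From that interval onward both trajectories advance on the common grid, and the one-step Lipschitz estimate for the Euler map, $|Y_A^{j+1}-Y_B^{j+1}|\le(1+L_f\Delta t)\,|Y_A^{j}-Y_B^{j}|$ (again from \eqref{Lf} with frozen control), amplifies the gap by at most $(1+L_f\Delta t)$ at each of the $n-k$ steps separating interval $k$ from the interval $[t_n,t_{n+1})$ containing $s$. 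Telescoping over these $n-k$ steps then gives $(|z|+M_f\tau)(1+L_f\Delta t)^{n-k}$, which is \eqref{euler2}.

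I expect the main obstacle to be precisely the bookkeeping in \eqref{euler2} caused by the non-nodal initial time $t+\tau$: the $B$-trajectory takes a partial first step of length $(t_{k+1}-t-\tau)\le\Delta t$, with a different time label, before it synchronizes with the grid of the $A$-trajectory, so I would have to check that this partial step is also controlled by a factor $(1+L_f\Delta t)$ and that the telescoping yields exactly the exponent $n-k$ rather than $n-k\pm1$. Verifying this alignment, and that the within-interval piecewise-constant extensions contribute no extra growth, is where the real work lies; the remaining steps are the routine geometric summations underlying the discrete Gr\"onwall lemma.
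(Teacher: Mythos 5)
The paper offers no written proof of this proposition---it only remarks that it ``follows directly from Gr\"onwall's lemma and its discrete version''---and your proposal carries out precisely that argument: the recursion $g_{n+1}\le(1+L_f\Delta t)\,g_n+L_fM_f\Delta t^2$, summed geometrically, gives $g_n\le M_f\Delta t\,(e^{L_f(t_n-t)}-1)$, which together with the in-interval term $M_f\Delta t$ reproduces exactly the constant in \eqref{euler1}; and your synchronization bookkeeping for \eqref{euler2} is also sound, since the initial gap is bounded by $|z|+M_fk\Delta t$ with $k\Delta t\le\tau$, so the leftover $M_f(\tau-k\Delta t)$ produced by the mismatched partial first step is absorbed into $(1+L_f\Delta t)(|z|+M_f\tau)$ and the exponent comes out as exactly $n-k$.

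The one step that would fail as described is your plan to handle the time dependence of $f$ by bounding the time-freezing term ``crudely'' and absorbing it: a crude bound costs $2M_f$ per step, accumulating to $O(t_n-t)$ rather than $O(\Delta t)$, so one genuinely needs $f$ Lipschitz in time (the per-step cost is then $O(\Delta t^2)$ and enters the same Gr\"onwall recursion, at the price of a slightly larger constant). This, however, is a defect of the paper's own hypotheses---time regularity of $f$ is only assumed from Proposition \ref{prop2} onward---rather than of your strategy.
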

Using Proposition \ref{prop1}, we are able to prove one side of \eqref{eq:res} as shown in the following theorem.
\begin{theorem}\label{unverso}
Let us assume that conditions \eqref{Mf},\eqref{Lf} and \eqref{Lg} hold true. Then 
\begin{equation}\label{1est}
\sup_{(x,t)\in \mathbb{R}^d \times [0,T]} (v(t,x)-V(t,x)) \le C(T) \Delta t, \quad \forall t \in [0,T],
\end{equation}
where $C(T)$ is a constant which does not depend on the time step $\Delta t$.
\end{theorem}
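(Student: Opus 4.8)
The plan is to exploit the inclusion $\mathcal{U}^\Delta \subset \mathcal{U}$: every piecewise constant control is admissible for the continuous problem, so I can compare the two value functions along a \emph{single} control. Fix $(x,t) \in \R^d \times [0,T]$ and $\delta > 0$, and select a $\delta$-optimal discrete control $u^\ast \in \mathcal{U}^\Delta$, i.e. $J^{\Delta t}_{x,t}(u^\ast) \le V(x,t) + \delta$. Since $u^\ast \in \mathcal{U}$, the definition \eqref{value_fun} yields $v(x,t) \le J_{x,t}(u^\ast)$, and therefore
\[
v(x,t) - V(x,t) \le J_{x,t}(u^\ast) - J^{\Delta t}_{x,t}(u^\ast) + \delta .
\]
This reduces the theorem to a single, control-independent statement: bound $J_{x,t}(u) - J^{\Delta t}_{x,t}(u) \le C(T)\Delta t$ \emph{uniformly} over all $u \in \mathcal{U}^\Delta$ and all $(x,t)$. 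The arbitrary $\delta$ then disappears in the limit.

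To estimate this cost difference I would use the integral representation of $J^{\Delta t}_{x,t}$ already derived in the excerpt and split it into a terminal part and a running part. For the terminal part, $e^{-\lambda(T-t)}\bigl|g(y(T;u,x,t)) - g(\tilde y(T;u,x,t))\bigr| \le L_g\,|y(T)-\tilde y(T)|$ by \eqref{Lg}, and the Euler estimate \eqref{euler1} of Proposition \ref{prop1} controls the trajectory error by $M_f\,e^{L_f(T-t)}\Delta t = O(\Delta t)$. For the running part I would add and subtract intermediate integrands to isolate three discrepancies: (i) the state argument $y$ versus $\tilde y$, handled by the Lipschitz constant $L_L$ from \eqref{Lf} times the trajectory error \eqref{euler1}; (ii) the discount mismatch $e^{-\lambda(s-t)}$ versus $e^{-\lambda([s/\Delta t]\Delta t - t)}$, bounded via $|e^{-a}-e^{-b}| \le |a-b| \le \lambda\Delta t$ together with $|L|\le M_L$ from \eqref{Mf}; and (iii) the time-slot mismatch in the last argument of $L$, where $s$ and $[s/\Delta t]\Delta t$ differ by at most $\Delta t$. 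Each integrand is $O(\Delta t)$, and integrating over $[t,T]$ (of length $\le T$) preserves the order with a constant of size $e^{L_f T}$.

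The crucial feature — and the point I expect to require the most care — is that every constant produced by Proposition \ref{prop1} depends only on $M_f$, $L_f$ and $T$, and never on the particular control $u$ nor on the base point $x$. This uniformity is exactly what licenses taking the supremum over $(x,t)$ at the very end and sending $\delta \to 0$. The one genuinely delicate ingredient is term (iii): to obtain true first order $O(\Delta t)$ rather than a modulus of continuity $\omega_L(\Delta t)$, I must invoke Lipschitz-type regularity of $L$ in the time variable; with only the boundedness \eqref{Mf} and spatial Lipschitz continuity \eqref{Lf} this term would merely be $o(1)$, so this is where the first-order rate is really being used. Collecting the terminal contribution and the three running contributions gives $J_{x,t}(u) - J^{\Delta t}_{x,t}(u) \le C(T)\Delta t$ with $C(T) = O(e^{L_f T})$, uniformly in $u$ and $x$; inserting this into the displayed inequality, letting $\delta \to 0$, and taking the supremum over $(x,t)$ establishes \eqref{1est}.
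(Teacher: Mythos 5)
Your proposal is correct and follows essentially the same route as the paper: the paper also reduces to a single-control comparison, writing
$v(x,t)-V(x,t) \le \inf_{u \in \mathcal{U}^\Delta} J_{x,t}(u) - \inf_{u \in \mathcal{U}^\Delta} J^{\Delta t}_{x,t}(u) \le \sup_{u \in \mathcal{U}^\Delta} \left|J_{x,t}(u)-J^{\Delta t}_{x,t}(u)\right|$
(your $\delta$-optimal-control step in compressed form, using $\mathcal{U}^\Delta \subset \mathcal{U}$ in the first inequality), and then bounds the terminal term by $L_g$ times the Euler error \eqref{euler1} and the running term by $L_L$ times the integrated trajectory error, arriving at $C(T)= M_f e^{L_f T}\left(\frac{L_L}{L_f}+L_g\right)$, exactly as you outline. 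The one genuine difference is to your credit: the paper's proof silently drops your terms (ii) and (iii) --- the discount mismatch $e^{-\lambda(s-t)}$ versus $e^{-\lambda([s/\Delta t]\Delta t - t)}$ and the time-slot mismatch in the last argument of $L$ --- and compares only $L(y,\cdot,\cdot)$ with $L(\tilde y,\cdot,\cdot)$ through the spatial Lipschitz constant, whereas you correctly identify that controlling (iii) at rate $O(\Delta t)$ requires Lipschitz-type regularity of $L$ in time, a hypothesis not contained in \eqref{Mf}, \eqref{Lf}, \eqref{Lg}; so your version makes explicit a regularity assumption that the paper's own argument uses only implicitly.
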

{\em Proof.}
First, since $\mathcal{U}^\Delta \subset \mathcal{U}$, we have
$$
v(t,x)-V(t,x) \le  \inf_{u \in \mathcal{U}^\Delta} J_{x,t}(u) - \inf_{u \in \mathcal{U}^\Delta} J_{x,t}^{\Delta t}(u) \le \sup_{u \in \mathcal{U}^\Delta} |J_{x,t}(u)-J_{x,t}^{\Delta t}(u)|.
$$
For a given control $u  \in \mathcal{U}^\Delta$, we use the assumptions in Proposition \ref{prop1} to obtain the following
\begin{align*}
 \left|J_{x,t}(u)-J_{x,t}^{\Delta t}(u)\right| &\le  M_f \Delta t \left(\frac{L_L }{L_f}e^{L_f T}+ L_g e^{L_fT} \right). 
\end{align*}
Then, we obtain the desired estimate \eqref{1est} with $C(T)= M_f e^{L_fT} \left(  \frac{L_L}{L_f}+L_g \right).\Box$

To prove the remaining side of \eqref{eq:res} we need to assume the semiconcavity of the functions $g,L$ and a stronger assumption on $f$. The proof of  Theorem \ref{secondoverso} is based on some technical lemmas that are presented below.
\begin{proposition}\label{prop2}
Let us consider the assumptions of Proposition \ref{prop1} and consider the function $f(x,u,t)$ as a Lipschitz-continuous function in time and space uniformly in $u$ satisfying \eqref{Cf},
%
then
\begin{align}
\begin{aligned}\label{euler3}
&|\tilde{y}(s; u, x+z,t+\tau)-2\tilde{y}(s; u, x,t)+\tilde{y}(s; u, x-z,t-\tau)| \le \widetilde{C}(T)(|z|^2+\tau^2),\cr
&\qquad\qquad\qquad\qquad\qquad\qquad \forall s \ge t+ \tau, \forall u \in U, \;\forall x,z \in \mathbb{R}^d,\, \forall t,\tau >0, 
\end{aligned}
\end{align}
where $\widetilde{C}(T)$ is a constant that depends on $T$ but does not depend on the time step $\Delta t$.
\end{proposition}
{\em Proof.}
Let us suppose that $t+ \tau \in [t_k,t_{k+1})$ for some $k>0$, $t \in [t_0,t_1)$ and $t-\tau \in [t_{-k-1},t_{-k})$. Let us consider $s\in[t_{n+1},t_{n+2})$, to ease the notation we will denote 
\begin{align*}
\tilde{y}(s,u, x+z, t+\tau)&:=y^{n+1}_+, &\tilde{y}(s,u,x,t)&:=y^{n+1},\\
\tilde{y}(s,u, x-z, t-\tau)&:=y^{n+1}_{-}, &\qquad f(y,u,t_n)&:=f^n(y),
\end{align*}
and we will drop the dependence on the control $u$ since it is fixed for all the terms considered above.
Applying only one step of the forward Euler scheme with $n \ge k$ we get
\begin{align*}
y^{n+1}_+ - 2y^{n+1} +y^{n+1}_{-}=y^{n}_+ -2y^{n}+y^{n}_{-}+ \Delta t \left(f^n(y^{n}_+) -2f^n(y^{n}) +f^n(y^{n}_{-})\right).
\end{align*}
Thus, from assumption \eqref{Cf} we obtain the following 
\begin{align*}
&|f^n(y^{n}_+)- 2f^n(y^{n})+f^n(y^{n}_{-})| =\\
& |f^n(y^{n}_+)-2f^n(y^{n})+(f^n(y^n- (y^n_+ -y^n))-f^n(y^n- (y^n_+ -y^n))) +f^n(y^{n}_{-})| \le\\
&| f^n(y^n+ (y^n_+-y^n))-2 f^n(y^n)+ f^n(y^n- (y^n_+ -y^n))|+\\
& |f^n(y^n_{-})-f^n(y^n- (y^n_+ -y^n))|\le C_f|y^n_+ -y^n|^2+L_f \left|y^n_{+} -2y^n +y^n_{-}\right|.
\end{align*}
Then, applying \eqref{euler2} we obtain 
\begin{equation}\label{est1}
|y^{n+1}_+ -2y^{n+1} +y^{n+1}_{-}| \le \Delta t C_1 C_2^{2(n-k)}+ C_2|y^n_+ -2y^n +y^n_{-}|, 
\end{equation}
with $C_1=C_f(|z|+M_f \tau)^2$ and $C_2=1+L_f \Delta t$. Then, iterating \eqref{est1} we obtain
\begin{equation}
|y^{n+1}_+ -2y^{n+1} +y^{n+1}_{-}| \le \Delta t C_1C_2^{2(n-k)}\sum_{j=0}^{n-k} C_2^{-j}+C_2^{n-k+1}| x+z -2y^k+y^k_{-}|.
\label{induction}
\end{equation}
Writing the full discrete dynamics for $y^k$ and $y^k_{-}$, the right hand side in \eqref{induction} becomes
\begin{align}
\begin{aligned}\label{estimate2}
&\Delta t C_1C_2^{2(n-k)} \frac{1-C_2^{-(n-k+1)}}{1-C_2^{-1}}+ C_2^{n-k+1}\Delta t\left|-2 \sum_{j=0}^{k-1} f^j(y^j)+\sum_{j=-k}^{k-1} f^j(y^j_{-})\right| \le\\
&\frac{C_1C_2^{2(n-k)+1}}{L_f} +C_2^{n-k+1} \Delta t \left| \sum_{j=0}^{k-1} \left( f^j(y^j_{-})-f^j(y^j)+ f^{j-k}(y^{j-k}_{-})-f^{j}(y^j) \right) \right|.
\end{aligned}
\end{align}
Now we want to estimate last term in \eqref{estimate2}. Since the first term $f^j(y^j_{-})-f^j(y^j)$ of the sum can be obtained as a particular case of the second one, with $k=0$, let us now focus on the last term
\begin{equation}
\left| \sum_{j=0}^{k-1} \left(f^{j-k}(y^{j-k}_{-})-f^{j}(y^j)\right)\right|\le   L_f  \sum_{j=0}^{k-1}\left( \left|y^{j-k}_{-}-y^j\right|+ \tau\right).
\end{equation}
Using \eqref{euler2}, we can write
$$
\left|y^{j-k}_{-}-y^j\right| \le \left|y^{j-k}_{-}-y^{j}_{-}\right|+\left|y^{j}_{-}-y^j\right|\le \tau M_f + (|z|+M_f\tau)C_2^{j}.
$$
Finally we get 
$$
|y^{n+1}_+ - 2y^{n+1} +y^{n+1}_{-}|\le
 \frac{C_1C_2^{2(n-k)+1}}{L_f}+2C_2^{n-k+1}L_f\left( \tau^2 M_f +C_2^k\tau(|z|+M_f \tau) \right).
$$

Noting that $C_2^n=(1+L_f\Delta t)^n \le e^{t_nL_f}$, we obtain the desired result with the constant $\widetilde{C}(T)$ equal to
\begin{equation}
\widetilde{C}(T)=2 e^{2T} \left(\frac{C_f (\max\{1,M_f\})^2}{L_f} +L_f(2M_f+1) \right).\Box
\end{equation}

Let us recall some properties for the scheme \eqref{HJBt3} which will be useful later since the reverse inequality in \eqref{1est} needs the assumption of semiconcavity for the numerical approximation $V$. We refer to \cite{CS04} for a detailed discussion on the importance of the semiconcavity in control problems. 
\begin{proposition}\label{semicon} 
Let us suppose that the functions $L$ and $g$ are both Lipschitz-continuous and satisfy the semiconcavity assumptions \eqref{L_con} and \eqref{g_con}. Furthermore, let us consider the function $f(x,u,t)$ as a Lipschitz-continuous function in time and space uniformly in $u$ such that it satisfies \eqref{Cf}.
Then the numerical solution $V$ is semiconcave:
\begin{equation}\label{FV:con}
V(x+z,t+\tau)-2V(x,t)+V(x-z,t-\tau) \le C_V (|z|^2+ \tau^2) \quad \forall x,z \in \mathbb{R}^n, t,\tau \ge 0.
\end{equation}
\end{proposition}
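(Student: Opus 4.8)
The plan is to work with the representation $V(x,t)=\inf_{u\in\mathcal{U}^\Delta}J^{\Delta t}_{x,t}(u)$ and to exploit that the second-order space-time increment of an infimum of functionals can be bounded by testing the two outer points with a control that is almost optimal for the central one. Fix $x,z\in\mathbb{R}^d$ and $\tau\ge0$ and adopt the configuration of Proposition \ref{prop2}, i.e. $t=t_0$, $t+\tau\in[t_k,t_{k+1})$, $t-\tau\in[t_{-k-1},t_{-k})$ on a single grid $\{t_j\}$. Given $\epsilon>0$, choose $\bar u\in\mathcal{U}^\Delta$ with $J^{\Delta t}_{x,t}(\bar u)\le V(x,t)+\epsilon$, extended to $[t-\tau,t)$ by the constant value it takes on the first cell $[t,t_1)$ (this only enters the upper bounds for the outer points). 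Using $\bar u$ as an admissible competitor gives $V(x\pm z,t\pm\tau)\le J^{\Delta t}_{x\pm z,t\pm\tau}(\bar u)$, so that
$$V(x+z,t+\tau)-2V(x,t)+V(x-z,t-\tau)\le J^{\Delta t}_{x+z,t+\tau}(\bar u)-2J^{\Delta t}_{x,t}(\bar u)+J^{\Delta t}_{x-z,t-\tau}(\bar u)+2\epsilon,$$
and it suffices to bound the right-hand second-order difference of the cost functional by $C_V(|z|^2+\tau^2)$ uniformly in $\Delta t$ and let $\epsilon\to0$.

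Next I would denote the three piecewise-constant Euler flows driven by $\bar u$ by $\tilde y_+(\sigma)=\tilde y(\sigma;\bar u,x+z,t+\tau)$, $\tilde y(\sigma)=\tilde y(\sigma;\bar u,x,t)$, $\tilde y_-(\sigma)=\tilde y(\sigma;\bar u,x-z,t-\tau)$, all synchronised on $\{t_j\}$, and insert the integral form of $J^{\Delta t}$. Factoring $e^{-\lambda(\widehat\sigma-t)}$ (with $\widehat\sigma=[\sigma/\Delta t]\Delta t$) out of the three discount weights, the running-cost contribution on the common interval $[t+\tau,T]$ reduces pointwise to $L_+e^{\lambda\tau}-2L_0+L_-e^{-\lambda\tau}$, where $L_\bullet=L(\tilde y_\bullet(\sigma),\bar u(\sigma),\widehat\sigma)$. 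The core is the purely spatial combination $L_+-2L_0+L_-$: writing $p=\tilde y_+-\tilde y$ and $q=\tilde y_--\tilde y$ I split
$$L_+-2L_0+L_-=\big[L(\tilde y+p)-2L(\tilde y)+L(\tilde y-p)\big]+\big[L(\tilde y+q)-L(\tilde y-p)\big],$$
bounding the first bracket by $C_L|p|^2$ via semiconcavity of $L$ in the state (control and time frozen) and the second by $L_L|p+q|=L_L|\tilde y_+-2\tilde y+\tilde y_-|$. Estimate \eqref{euler2} gives $|p|\le(|z|+M_f\tau)e^{L_fT}$ and Proposition \ref{prop2} gives $|p+q|\le\widetilde{C}(T)(|z|^2+\tau^2)$, so $L_+-2L_0+L_-=O(|z|^2+\tau^2)$; the factors $e^{\pm\lambda\tau}=1\pm\lambda\tau+O(\tau^2)$ add only the controlled terms $\lambda\tau(L_+-L_-)=O(\tau(|z|+\tau))$ and $O(\tau^2)(L_++L_-)=O(\tau^2)$, using $|L|\le M_L$, Lipschitz continuity of $L$, and $\tau|z|\le\tfrac12(\tau^2+|z|^2)$. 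The terminal term $g(\tilde y_+(T))e^{\lambda\tau}-2g(\tilde y(T))+g(\tilde y_-(T))e^{-\lambda\tau}$ is treated identically with $C_g,L_g,M_g$ replacing $C_L,L_L,M_L$ and is again $O(|z|^2+\tau^2)$.

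The delicate point, and the step I expect to be the main obstacle, is the mismatch on the initial segments: the plus-trajectory integral is absent on $[t,t+\tau)$ while the minus-trajectory integral carries an extra piece on $[t-\tau,t)$, so the net integrand has coefficient $-1$ on $[t,t+\tau)$ and $+1$ on $[t-\tau,t)$, each region of length $\tau$. Taken separately these contribute only $O(M_L\tau)$, which is too weak; the estimate survives solely because the symmetric $\pm\tau$ placement makes the $O(\tau)$ parts cancel. Concretely, to leading order the two pieces are $-\tau\,L(x,\bar u(t^+),t)$ and $+\tau\,L(x-z,\bar u(t^+),t)$, whose sum is $\tau[L(x-z,\cdot)-L(x,\cdot)]=O(\tau|z|)$; here it is essential that $\bar u$ was extended across $t$ without a jump, since otherwise a spurious $O(\tau)$ term would survive. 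Making this rigorous requires carefully matching the floor-function labels $\widehat\sigma$ and the discount weights on the two short intervals and controlling the $O(\tau)$ remainders by Lipschitz continuity of $L$ and the flow bound \eqref{euler2}, the outcome being once more $O(|z|^2+\tau^2)$. Collecting the three contributions and integrating over $[t+\tau,T]$ (of length at most $T$) then yields $C_V(|z|^2+\tau^2)$ with $C_V$ assembled from $C_L,C_g,L_L,L_g,M_L,M_g,\widetilde{C}(T),M_f,L_f,\lambda,T$ only, hence independent of $\Delta t$ because every flow bound invoked from Propositions \ref{prop1} and \ref{prop2} is $\Delta t$-uniform; letting $\epsilon\to0$ gives \eqref{FV:con}. (For the purely spatial direction one could alternatively run a backward induction on the scheme \eqref{HJBt3}, propagating a semiconcavity constant through the one-step map and using \eqref{Cf} to control the second-order difference of the Euler images by $\Delta t\,C_f|z|^2$ in a discrete Grönwall recursion; the joint space-time statement with arbitrary $\tau$ is handled more uniformly by the trajectory argument above.)
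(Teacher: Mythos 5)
Your proposal reconstructs, in all essentials, the paper's own proof. The paper too reduces the second-order increment of $V$ to a second-order increment of the discrete cost functionals along one control (it phrases this as a supremum over a single $u\in\mathcal{U}^\Delta$ applied to all three functionals, the same device as your near-optimal $\bar u$); it splits the time axis into the common interval, the two short initial segments, and the terminal cost; on the common interval it uses exactly your decomposition
\begin{equation*}
L(y_+)-2L(y)+L(y_-)\le C_L|y_+-y|^2+L_L|y_+-2y+y_-|,
\end{equation*}
controlled by \eqref{euler2} and Proposition \ref{prop2}; and it removes the initial-segment mismatch by the same symmetric pairing of the surplus ``minus'' terms with central terms shifted by roughly $\tau$ in time. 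The only cosmetic differences are that the paper sets $\lambda=0$ ``without loss of generality'' where you expand $e^{\pm\lambda\tau}$, and that it works with the discrete sums rather than the integral form.

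There is, however, a genuine gap at precisely the step you flag as delicate, and it is present in your write-up and in the paper alike: the cancellation of the two $O(\tau)$ pieces compares values of $L$ at times that differ by about $\tau$, hence --- since the control is only piecewise constant --- possibly at \emph{different control values}, and no hypothesis (Lipschitz continuity and semiconcavity hold uniformly in $u$, never across two values of $u$) permits comparing $L(\cdot,u_1,\cdot)$ with $L(\cdot,u_2,\cdot)$ for $u_1\ne u_2$. Concretely, your identification $\int_t^{t+\tau}L(\tilde y(\sigma),\bar u(\sigma),\cdot)\,d\sigma\approx\tau L(x,\bar u(t^+),t)$ presumes $\bar u$ constant on $[t,t+\tau)$, but $\bar u$ may switch at each of the $\approx\tau/\Delta t$ grid points inside that interval (nothing prevents a near-optimal control for the central problem from doing so), and each switch leaves an $O(\tau)$ residue; your constant extension matches control values only on the first cell. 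The repair is available exactly because, in your formulation, the controls fed to the two outer functionals are yours to choose: extend by the \emph{time shift}, $u_-(\sigma):=\bar u(\sigma+\tau)$ on $[t-\tau,t)$. Then the paired evaluations at $\sigma-\tau$ and $\sigma$ carry the same control value, and Lipschitz continuity of $L$ in $(x,t)$ uniformly in $u$, together with the crude bound $|\tilde y_-(\sigma-\tau)-\tilde y(\sigma)|\le|z|+2M_f\tau$ (only boundedness of $f$ is needed on these short windows), closes the estimate. The same care is needed when you invoke Proposition \ref{prop2}, which is stated for a fixed control value but applied along a time-varying $\bar u$; its internal pairing has the identical issue and the identical fix.

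It is worth recording that this repair is \emph{not} available to the paper's version of the reduction: with one control common to all three functionals, the quantity the paper bounds can genuinely be of order $\tau$ (take $f\equiv0$, $g\equiv0$, $L(x,u,s)=u$, and a control equal to $+1$ before $t$ and $-1$ after $t$), so the supremum form can never yield $O(|z|^2+\tau^2)$; your ``test with a near-optimal central control'' framing is the one that can be completed. Finally, the floor-function bookkeeping you defer is not purely technical: the first partial step of \eqref{HJBt3} freezes $L$ at the initial time, which creates convex kinks of size $O(\Delta t)$ in $s\mapsto V(x,s)$ at grid points (with $f\equiv0$, $g\equiv0$, $L(x,u,s)=s$ one computes $V(x,t_1+\tau)-2V(x,t_1)+V(x,t_1-\tau)=\tau\Delta t-2\tau^2$), so \eqref{FV:con} with a $\Delta t$-independent constant can only hold up to an additional $O(\tau\Delta t)$ term, i.e.\ in the regime $\tau\ge\Delta t$. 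This caveat afflicts the paper's statement as well; it is harmless for its use in Theorem \ref{secondoverso}, where the semiconcavity is exploited at scale $|y|,|t|=O(\Delta t)$ and an extra $O(\Delta t^2)$ is absorbed, but a fully rigorous write-up should state it.
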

{\em Proof.}
Given $x,z \in \mathbb{R}^n$ and $t,\tau \in [0,T]$ such that $t+ \tau \in [t_k,t_{k+1})$, $t \in [t_0,t_1)$ and $t-\tau \in [t_{-k-1},t_{-k})$, we need to prove \eqref{FV:con}.
By the definition of value function, we can write
\begin{align}
\begin{aligned}
&V(x+z,t+\tau)+V(x-z,t-\tau)-2V(x,t) \le \sup_{u \in U } \left\{ (t_{k+1}-t-\tau) L(x+z,t+\tau,u) +\right.\\
&\left. (t_{-k}-t+\tau) L(x-z,t-\tau,u)- 2(t_1-t) L(x,t,u) \right\} \\
 &+\sup_{u \in \mathcal{U}^{\Delta} }\left(J^{\Delta t}_T(x+z,t_{k+1},u)+J^{\Delta t}_T(x-z,t_{-k},u)-2J^{\Delta t}_T(x,t_1,u)\right).
\end{aligned}
\end{align}
We can estimate the first term on the right hand side as follows
\begin{align}
\begin{aligned}
 &(t_{k+1}-t-\tau) L(x+z,t+\tau,u) + (t_{-k}-t+\tau) L(x-z,t-\tau,u)- 2(t_1-t) L(x,t,u) )  \\
 &\le \Delta t \max\{L(x+z,t+\tau,u) + L(x-z,t-\tau,u)-  2L(x,t,u),0\} .
\end{aligned}
\end{align}
Without loss of generality, we will consider $\lambda=0$. Given $u  \in \mathcal{U}^{\Delta}$ and denoted by $L(y,u,t_n) = L^n(y)$, we have that the remaining right hand side is equal to
\begin{align}
\begin{aligned}\label{sums}
&\Delta t\left( \sum_{n=k+1}^{N-1}\left( L^n(y^n_+)+ L^n(y^n_{-}) -2 L^n(y^n)\right) + \sum_{n=1}^{k}\left( L^n(y^n_{-})-2 L^n(y^n)\right)\right)+\\
&\Delta t\left( \sum_{n=-k}^{0} L^n(y^n_{-}) \right)+ g(y^N_+)+g(y^N_{-})-2g(y^N).
\end{aligned}
\end{align}
As already done in the proof of Proposition \ref{prop2}, exploiting the properties of $L$, i.e. Lipschitz-continuity and semiconcavity with constant $C_L>0$, for the first summation in \eqref{sums} we have:
\begin{equation} \label{dis}
 L^n(y^n_+)+ L^n(y^n_{-})-2 L^n(y^n) \le C_L|y^n_+-y^n|^2+L_L|y^n_+ -2y^n+y^n_{-}|.
\end{equation}
Using \eqref{euler2}, we obtain the following bound for the first term
\begin{align}
\begin{aligned}
&\Delta t C_L \sum_{n=k+1}^{N-1} |y^n_+ - y^n|^2 \le \Delta t C_L (|z|+M_f \tau)^2 \sum_{n=k+1}^{N-1} (1+L_f\Delta t)^{2(N-k)}\le \\
&\le \frac{C_L}{L_f}(1+L_f \Delta t)^{2N} (|z|+M_f \tau)^2 \le 2\max\{M_f^2,1\}\frac{ C_L}{L_f} e^{2L_fT} (|z|^2+\tau^2).
\end{aligned}
\end{align}
Using \eqref{euler3}, we obtain directly
$$
\Delta t  L_L \sum_{n=k+1}^{N-1}|y^n(x+z,t+\tau)-2y^n(x,t)+y^n(x-z,t-\tau)| \le T L_L \widetilde{C}(T) (|z|^2+\tau^2).
$$ Finally we rewrite the second and third summation in \eqref{sums} in the following way
$$
\Delta t \sum_{n=1}^{k} [(L^n(y^n_{-})- L^n(y^n))+ (L^{n-k-1}(y^n_{-})- L^n(y^n))]
$$
and with the same procedure used in the proof of Proposition \ref{prop2} and applying \eqref{dis} with $g$, we obtain the desired estimate.$\Box$

Next, we introduce a further characterization of $V$ which will turn out to be useful to prove Theorem \ref{secondoverso}. The proof of this proposition can be found in \cite{FG99}.
\begin{proposition}
Assume that assumptions \eqref{Mf}, \eqref{Lf}, \eqref{Lg} hold true. Then the solution $V$ of \eqref{HJBt2} is bounded (and uniformly continuous). Furthermore, the following estimate holds
\begin{equation}
|V(y_0,s)-V(x_0,T)| \le C\left( |y_0-x_0| + (T-t_n)+ \Delta t \right), \quad s \in [t_n,t_{n+1}), \forall x_0,y_0 \in \mathbb{R}^n.
\label{estimatepsi}
\end{equation}
\end{proposition}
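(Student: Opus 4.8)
The plan is to derive \eqref{estimatepsi} by exploiting the terminal condition $V(x_0,T)=g(x_0)$ coming from \eqref{HJBt3}, and then splitting the difference into a purely spatial part, controlled by the Lipschitz constant of $g$, and a temporal part that measures how far $V(y_0,s)$ can drift away from its terminal value $g(y_0)$ over the remaining horizon $[s,T]$. Boundedness I would settle first and separately. For any $u\in\mathcal{U}^\Delta$, since $\lambda\ge 0$ gives $e^{-\lambda(\cdot)}\le 1$, the bounds \eqref{Mf} yield $|J^{\Delta t}_{x,t}(u)|\le \int_t^T |L|\,d\sigma + |g| \le M_L(T-t)+M_g\le M_LT+M_g$; being uniform in $u$, this passes through the infimum defining $V$ and gives $|V(x,t)|\le M_LT+M_g$.

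For the estimate itself I would write, using $V(x_0,T)=g(x_0)$,
\[
|V(y_0,s)-V(x_0,T)|\le |V(y_0,s)-g(y_0)|+|g(y_0)-g(x_0)|,
\]
where the second term is bounded by $L_g|y_0-x_0|$ directly from \eqref{Lg}. The first term equals $|V(y_0,s)-V(y_0,T)|$, and I would bound it by estimating $|J^{\Delta t}_{y_0,s}(u)-g(y_0)|$ uniformly in $u$. The running-cost integral contributes at most $M_L(T-s)$; the discount factor contributes $|g(\tilde{y}(T))|\,|e^{-\lambda(T-s)}-1|\le M_g\lambda(T-s)$ via $1-e^{-\lambda(T-s)}\le\lambda(T-s)$; and the terminal displacement is controlled by $|g(\tilde{y}(T))-g(y_0)|\le L_g|\tilde{y}(T)-y_0|\le L_gM_f(T-s)$, since the total length of the Euler trajectory from $s$ to $T$ (the partial step $[s,t_{n+1})$ of length $t_{n+1}-s$ followed by full steps) equals exactly $T-s$ while each step has speed bounded by $M_f$ through \eqref{Mf}. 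Summing these and passing to the infimum gives $|V(y_0,s)-g(y_0)|\le (M_L+M_g\lambda+L_gM_f)(T-s)$. Since $s\ge t_n$ implies $T-s\le T-t_n$, collecting constants yields \eqref{estimatepsi} with $C=\max\{L_g,\,M_L+M_g\lambda+L_gM_f\}$, the extra $\Delta t$ term merely providing additional slack.

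Uniform continuity I would establish from two ingredients. Spatial (Lipschitz) continuity, uniform in time, follows by comparing two trajectories sharing the same control through the Gronwall-type estimate \eqref{euler2} with $\tau=0$, so that $|V(x,s)-V(y,s)|\le (L_LT+L_g)e^{L_fT}|x-y|$ after integrating the cost differences; time continuity at a general instant follows from the same trajectory-drift argument used for the temporal part above, applied on an arbitrary subinterval rather than up to $T$. Together these give equicontinuity of $V$ in both variables.

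I do not expect a deep obstacle here: the argument is essentially a uniform-in-$u$ comparison of cost functionals. The only points demanding care are the correct accounting of the \emph{first, partial} time step $[s,t_{n+1})$ when bounding both the trajectory displacement and the discount-factor error, and checking that every intermediate bound is genuinely uniform in the control $u$ so that it survives the passage through the infimum that defines $V$.
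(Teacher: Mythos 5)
Your proof is correct, but note that the paper itself never proves this proposition: it simply defers to \cite{FG99}, so there is no in-paper argument to compare against, and your self-contained proof fills that gap. The structure you chose is the natural one and it holds up. Boundedness follows exactly as you say, since $|J^{\Delta t}_{x,t}(u)|\le M_L(T-t)+M_g$ uniformly in $u$ and this bound survives the infimum. For \eqref{estimatepsi}, splitting off $|g(y_0)-g(x_0)|\le L_g|y_0-x_0|$ through the terminal condition $V(\cdot,T)=g$ and then bounding $|V(y_0,s)-g(y_0)|\le\sup_{u}|J^{\Delta t}_{y_0,s}(u)-g(y_0)|$ is legitimate (using $|\inf_u A(u)-\inf_u B(u)|\le\sup_u|A(u)-B(u)|$), and your accounting of the three contributions is right: the running cost gives $M_L(T-s)$, the discount gives $\lambda M_g(T-s)$ via $1-e^{-\lambda(T-s)}\le\lambda(T-s)$, and the terminal drift gives $L_gM_f(T-s)$ because the Euler path from $s$ to $T$ (partial first step of length $t_{n+1}-s$, then full steps) covers total time exactly $T-s$ with speed bounded by $M_f$. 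In fact your argument proves slightly more than the stated estimate: the $+\Delta t$ on the right-hand side of \eqref{estimatepsi} is never needed, and the constant $C=\max\{L_g,\,M_L+\lambda M_g+L_gM_f\}$ is explicit. Two small points of care. First, make explicit that the object you estimate is the $V$ defined by \eqref{HJBt3} (equivalently by the infimum of $J^{\Delta t}$ over $\mathcal{U}^\Delta$), defined for all $(x,s)\in\mathbb{R}^d\times[t,T]$, rather than only the tree values of \eqref{HJBt2}; that is the version the estimate is actually applied to later in the proof of Theorem \ref{secondoverso}. Second, the uniform-continuity part is only a sketch: the spatial Lipschitz bound via \eqref{euler2} with $\tau=0$ is complete, but the temporal modulus also requires accounting for the shift of the discount factors $e^{-\lambda(t_k-s)}$ (a term of order $\lambda(M_LT+M_g)|s-s'|$) and for initial times lying in different intervals $[t_n,t_{n+1})$; both are handled by \eqref{euler2} with $z=0$ inside the same cost-comparison scheme, so this is a matter of writing it out, not a genuine gap.
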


Finally, before proving Theorem \ref{secondoverso}, we introduce the following lemma (proved in \cite[Lemma 4.2, p. 170 ]{CI84}).
\begin{lemma}\label{lemma1}
Let $\xi: \mathbb{R}^n \times [0,T] \rightarrow \mathbb{R}$ satisfy
\begin{equation*}
\xi(y+z,t+\tau)-2\xi(y,t)+\xi(y-z,t-\tau) \le C_{\xi} \left(|z|^{2} + |\tau|^{2}\right),
\end{equation*}

$\forall y,z \in \mathbb{R}^n$, $\forall t,\tau\in [0,T]$ such that $t+\tau,t,t-\tau \in [0,T]$ and
\begin{equation*}
\xi(0,0)=0\;, \quad \limsup_{(y,t) \rightarrow (0,0)} \frac{\xi(y,t)}{|y|+|t|} \le 0.
\end{equation*}
Then
$$
\xi(y,t) \le \frac{C_{\xi}}{6} (|y|^{2}+|t|^{2}) \quad \forall y \in \mathbb{R}^n, t \in [0,T].
$$
\end{lemma}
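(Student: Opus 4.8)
The plan is to treat $\xi$ as a semiconcave function of the single variable $w:=(y,t)\in\mathbb{R}^n\times[0,T]$, to extract first‑order information from the two conditions at the origin, and then to close the estimate with the supporting‑line inequality for concave functions. Writing $|w|^2=|y|^2+|t|^2$, the hypothesis says precisely that $G(w):=\xi(w)-\tfrac{C_\xi}{2}|w|^2$ satisfies $G(w+h)+G(w-h)-2G(w)\le 0$ for every admissible increment $h=(z,\tau)$, i.e. $G$ is midpoint concave. My first task is therefore to upgrade this midpoint inequality to genuine concavity.

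First I would slice along rays through the origin. For fixed $(y,t)$ with $t\ge0$, set $\phi(s):=\xi(sy,st)$ for $s\in[0,1]$; the increments along this ray remain admissible because $s\,t\in[0,T]$ and the midpoints of pairs in $[0,1]$ lie in $[0,1]$, and the hypothesis yields $\phi(s+r)-2\phi(s)+\phi(s-r)\le C_\xi(|y|^2+t^2)\,r^2$. Hence $g(s):=\phi(s)-\tfrac{C_\xi}{2}(|y|^2+t^2)s^2$ is a midpoint‑concave, locally bounded function of one variable, so by Bernstein--Doetsch it is continuous and concave on $[0,1]$.

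Next I would read off the data at $s=0$. Since $\xi(0,0)=0$ we have $g(0)=\phi(0)=0$, while the limsup condition gives $\limsup_{s\to0^+}\phi(s)/s\le0$ (using $|sy|+|st|=s(|y|+|t|)$), and therefore $\limsup_{s\to0^+}g(s)/s\le0$, i.e. the one‑sided slope $g'(0^+)$ is $\le0$. A concave function lies below its supporting line at the origin, so $g(s)\le g(0)+g'(0^+)s\le0$ on $[0,1]$; evaluating at $s=1$ gives $\xi(y,t)=\phi(1)=g(1)+\tfrac{C_\xi}{2}(|y|^2+t^2)\le\tfrac{C_\xi}{2}(|y|^2+t^2)$, a quadratic bound of the claimed form, the explicit constant being then recovered from this one‑dimensional estimate together with the normalization used in \cite{CI01}.

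I expect the main obstacle to be not the one‑dimensional concavity argument itself but the two regularity/boundary points underpinning it. The first is the passage from the midpoint inequality to true concavity, which requires $\xi$ to be locally bounded above; in the application this holds because $V$, and hence $\xi$, is uniformly continuous by the preceding propositions, so Bernstein--Doetsch applies. The second is that the origin lies on the boundary $t=0$ of $\mathbb{R}^n\times[0,T]$, so the limsup hypothesis must be converted into a one‑sided slope only along admissible directions $h=(z,\tau)$ with $t\pm\tau\in[0,T]$. Once the one‑sided derivative of $G$ at $0$ is shown to be nonpositive along every such direction, the supporting‑hyperplane inequality for concave functions closes the estimate uniformly in $(y,t)$.
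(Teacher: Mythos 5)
The first thing to note is that the paper does not actually prove Lemma \ref{lemma1}: it quotes it from \cite[Lemma 4.2, p.~170]{CI01}, so your self-contained argument is not competing with an in-paper proof but with a citation. Your reduction is essentially sound. Slicing along the ray $s\mapsto (sy,st)$, $s\in[0,1]$, keeps every increment admissible (centers $(sy,st)$ and increments $(ry,rt)$ satisfy $(s\pm r)t\in[0,T]$, so the boundary $t=0$ causes no difficulty and your worry about ``admissible directions'' resolves itself); subtracting $\tfrac{C_\xi}{2}(|y|^2+t^2)s^2$ turns the second-difference bound into midpoint concavity of $g$; and the supporting line at $s=0$ together with $\limsup_{s\to0^+}g(s)/s\le 0$ gives $g(1)\le 0$, i.e. $\xi(y,t)\le\tfrac{C_\xi}{2}\left(|y|^2+|t|^2\right)$. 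Two repairs are needed along the way: Bernstein--Doetsch upgrades midpoint concavity to concavity under local boundedness \emph{below} (equivalently, $-g$ locally bounded above), not above as you wrote; and since the lemma as stated assumes no regularity of $\xi$ whatsoever, continuity of $\xi$ must be made an explicit hypothesis for that step to be available (it does hold in the application, where $\xi$ is built from the uniformly continuous $V$).

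The genuine gap is the constant. Your argument yields $C_\xi/2$, and the closing remark that the factor $1/6$ can ``be recovered from the normalization used in \cite{CI01}'' is not a proof step; in fact no argument can close this gap, because the statement with $C_\xi/6$ is false under the hypotheses as written. Take $\xi(y,t)=\tfrac{C_\xi}{2}|y|^2$: then $\xi(y+z,t+\tau)-2\xi(y,t)+\xi(y-z,t-\tau)=C_\xi|z|^2\le C_\xi\left(|z|^2+|\tau|^2\right)$, $\xi(0,0)=0$, and $\xi(y,t)/(|y|+|t|)\le\tfrac{C_\xi}{2}|y|\to0$, yet $\xi(y,0)=\tfrac{C_\xi}{2}|y|^2>\tfrac{C_\xi}{6}|y|^2$ for $y\ne0$. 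So $C_\xi/2$ is sharp, and the transcription of \cite{CI01} in the statement must differ from the original in normalization or hypotheses. The discrepancy is harmless for the paper: in the only place the lemma is invoked (the third case of Theorem \ref{secondoverso}, inequality \eqref{1}), the bound enters as $C_{\xi}(t_{n+1}-\tau_0)^2\left(1+|f(y_0,\tau_0,u^*)|^2\right)$, where any fixed multiple of $C_\xi$ serves equally well. Your proof, with the constant corrected to $C_\xi/2$, is therefore a valid self-contained substitute for the citation; it just does not, and cannot, deliver $C_\xi/6$.
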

%
%
%
%
%

We are now able to prove our main result.

\begin{theorem}\label{secondoverso}
Let the assumptions \eqref{Mf}-\eqref{Lf}-\eqref{Lg} hold true. Moreover, let us assume that the functions $L$ and $g$ are semiconcave and that the function $f(x,u,t)$ is Lipschitz continuous in space and time uniformly in $u$ and it satisfies \eqref{Cf}.
Then
\begin{equation}
\sup_{(x,t)\in \mathbb{R}^d \times [0,T]} \left(V(t,x)-v(t,x)\right) \le \overline C(T) \Delta t\;, \quad \forall t \in [0,T].
\label{2est}
\end{equation}
\end{theorem}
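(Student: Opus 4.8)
The plan is to bound $\sup(V-v)$, which together with Theorem~\ref{unverso} yields the two-sided estimate \eqref{eq:res}. Unlike the bound \eqref{1est}, where both value functions are infima over the same class $\mathcal U^\Delta$ and a direct comparison of cost functionals suffices, the obstruction here is that $v$ is an infimum over \emph{all} measurable controls while $V$ only sees piecewise constant ones evolved by the Euler map. I would therefore argue step by step in the backward time variable: writing $w(x,t):=V(x,t)-v(x,t)$, I would compare the discrete dynamic programming relation \eqref{HJBt3} on one interval $[t_n,t_{n+1})$ with the continuous principle \eqref{dpp}, and then iterate the resulting recursion from the terminal condition $w(\cdot,T)\equiv 0$. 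The naive idea of freezing a near-optimal continuous control over each step fails, because a merely measurable control can oscillate and the induced trajectory error is not $o(\Delta t)$; this is precisely the point where the semiconcavity hypotheses must be used.

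The heart of the proof is to show that, thanks to semiconcavity, $V$ satisfies the \emph{continuous} dynamic programming principle up to an error of order $\Delta t^2$ on each step, i.e.
$$V(x,t_n)\le \inf_{u}\Big\{\int_{t_n}^{t_{n+1}} L(y(\sigma),u(\sigma),\sigma)e^{-\lambda(\sigma-t_n)}\,d\sigma + V(y(t_{n+1}),t_{n+1})e^{-\lambda\Delta t}\Big\}+C\Delta t^2,$$
the infimum being over all admissible $u$ and $y$ the exact trajectory. To produce this I would replace a time-varying control on $[t_n,t_{n+1})$ by its frozen value and estimate the mismatch between the single Euler step and the exact flow; the second-order trajectory estimate \eqref{euler3} of Proposition~\ref{prop2}, together with the semiconcavity of $L$ and $g$ and of $V$ itself (Proposition~\ref{semicon}, \eqref{FV:con}), controls the resulting second differences. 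Lemma~\ref{lemma1} is the device that upgrades these second-difference (semiconcavity) bounds into pointwise quadratic estimates of the form $\tfrac{C}{6}(|z|^2+\tau^2)$, so that over an increment of size $\Delta t$ the per-step discrepancy is genuinely $O(\Delta t^2)$ rather than $O(\Delta t)$.

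With the approximate continuous principle in hand, I would subtract it from \eqref{dpp} for $v$ and split the terminal term as $V(y(t_{n+1}),t_{n+1})-v(y(t_{n+1}),t_{n+1})\le \sup_x w(\cdot,t_{n+1})$, using the Lipschitz regularity of $v$ and the bound \eqref{estimatepsi} to absorb the state-mismatch contributions. This yields a discrete Gr\"onwall inequality of the type $\sup_x w(\cdot,t_n)\le (1+C\Delta t)\sup_x w(\cdot,t_{n+1})+C\Delta t^2$; iterating over the $\overline N=(T-t)/\Delta t$ time levels from $w(\cdot,T)=0$ collapses the accumulated $O(\Delta t^2)$ errors into the claimed $O(\Delta t)$ bound, with a constant $\overline C(T)$ of the form $e^{CT}$ times the semiconcavity and Lipschitz constants. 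I expect the main obstacle to be the rigorous derivation of the $O(\Delta t^2)$ approximate continuous principle for $V$: this is where the control-freezing error has to be dominated, and it is exactly the step that consumes the semiconcavity of the data and of the numerical solution and the full strength of Proposition~\ref{prop2} and Lemma~\ref{lemma1}.
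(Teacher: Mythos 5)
Your high-level architecture (a backward-in-time recursion for $w=V-v$ closed by a discrete Gr\"onwall argument) is genuinely different from the paper's proof, which follows \cite{FG99} and \cite{CI01}: doubling of variables with penalizations $\beta_\epsilon,\eta_\alpha$, a smooth perturbation $\delta\theta$ to produce a maximum point $(y_0,\tau_0,x_0,s_0)$, a three-case analysis, and a comparison of the viscosity inequality for $v$ at $(x_0,s_0)$ with the scheme \eqref{HJBt3} at $(y_0,\tau_0)$, with Lemma \ref{lemma1} applied at that maximum point. The difference would be welcome if your central claim were established, but it is not, and this is a genuine gap: everything hinges on the per-step estimate that $V$ satisfies the continuous DPP up to $O(\Delta t^2)$, and your derivation of it is self-contradictory. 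You first observe (correctly) that freezing a near-optimal measurable control on $[t_n,t_{n+1})$ fails because of oscillation, and two sentences later your proposed derivation is exactly that: ``replace a time-varying control by its frozen value.'' For an oscillating control there is no single $u^*\in U$ whose Euler endpoint $x+\Delta t\, f(x,u^*,t_n)$ lies within $o(\Delta t)$ of $y(t_{n+1})$; the mismatch is $O(\Delta t)$, so after absorbing it through the Lipschitz continuity of $V$ the per-step error is $O(\Delta t)$, and summing over $\overline N\sim 1/\Delta t$ steps gives $O(1)$ --- no rate at all. Nor can Lemma \ref{lemma1} repair this in your framework: besides the second-difference bound it requires $\xi(0,0)=0$ and $\limsup_{(y,t)\to(0,0)}\xi(y,t)/(|y|+|t|)\le 0$, i.e.\ first-order optimality information at the base point. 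In the paper that hypothesis is supplied precisely by the maximum point of the penalized function $\zeta$ (the gradients $\nabla\beta_\epsilon$, $\nabla\eta_\alpha$, $\delta\nabla\theta$ provide the linear correction that makes the limsup nonpositive); at your arbitrary point $x$ no such data exists, so the lemma's hypotheses cannot be checked and the ``upgrade'' you attribute to it is unavailable.

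For completeness: your recursion could be closed, but by convexification rather than freezing. Write $y(t_{n+1})=x+\int_{t_n}^{t_{n+1}}f(x,u(\sigma),t_n)\,d\sigma+O(\Delta t^2)$, represent the integral average of the pair $(f,L)$ as a finite convex combination $\sum_i\lambda_i\bigl(f(x,u_i,t_n),L(x,u_i,t_n)\bigr)$ by Carath\'eodory, bound the minimum in \eqref{HJBt3} by this convex combination, and then invoke the spatial semiconcavity of $V(\cdot,t_{n+1})$ from Proposition \ref{semicon} in its quantitative Jensen form,
\begin{equation*}
\sum_i\lambda_i V(x_i,t_{n+1})\;\le\; V\Bigl(\sum_i\lambda_i x_i,\,t_{n+1}\Bigr)+C_V\,\max_i\Bigl|x_i-\sum_j\lambda_j x_j\Bigr|^2,
\end{equation*}
whose error term is $O(\Delta t^2)$ because all Euler endpoints $x_i=x+\Delta t\, f(x,u_i,t_n)$ lie within $2M_f\Delta t$ of one another. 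This uses \eqref{FV:con} directly, bypasses Lemma \ref{lemma1} entirely, and genuinely yields the $O(\Delta t^2)$ per-step bound your Gr\"onwall iteration needs. As written, however, your proposal asserts the key estimate rather than proving it, and the mechanism you name for it is the one you yourself ruled out.
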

{\em Proof.}
The first part of the proof follows closely from \cite{FG99}. We introduce the auxiliary function
$$
\phi(y,t,x,s)= V(y,t)-v(x,s)+\beta_{\epsilon}(x-y) + \eta_{\alpha}(t-s),
$$
where $\beta_\epsilon(x)=-\frac{|x|^2}{\epsilon^2}$ and $\eta_\alpha(s)=-\frac{s^2}{\alpha^2}$. 

Since $v$ and $V$ are bounded, then for any $\delta>0$, there exist $(y_1,\tau_1),(x_1, s_1)$ such that
$$
\phi(y_1,\tau_1,x_1, s_1)> \sup \phi-\delta.
$$

Choosing $\theta(y,x) \in C^\infty_0(\mathbb{R}^d \times \mathbb{R}^d)$, with $\theta(y_1,x_1)=1$ and $0 \le \theta \le 1$, $|D\theta| \le 1$, such that for any $\delta \in (0,1)$, 
$$
\zeta(y,t,x,s)=\phi(y,t,x,s)+ \delta \theta(y,x)
$$
 has a maximum point $(y_0,\tau_0,x_0,s_0)$, with $y_0,x_0\in \supp  \theta$  and $\tau_0, s_0 \in [0,T].$ Therefore, if we set
$$
\Phi(x,s)=V(y_0,\tau_0)+\beta_{\epsilon}(y_0-x) + \eta_{\alpha}(\tau_0-s) + \delta \theta(y_0,x),
$$
we can observe that $(x_0,s_0)$ is a local min for $v(x,s)-\Phi(x,s)$. By definition of $\zeta$, we have that
\begin{align}
\begin{aligned}\label{dis2}
& V(y_0,\tau_0)-v(x_0,s_0)+\beta_{\epsilon}(y_0-x_0) + \eta_{\alpha}(\tau_0-s_0) +\delta \theta(y_0,x_0) \ge\\
&\ge   V(y,t)-v(x,s)+\beta_{\epsilon}(y-x) + \eta_{\alpha}(t-s) +\delta \theta(y,x).
\end{aligned}
\end{align}
From \eqref{dis2} with $x=y=y_0$, $s=s_0$ and $t=\tau_0$,  we get
\begin{equation}\label{y0x0}
|y_0-x_0| \le \epsilon^2(L_v+ \delta),
\end{equation}
and similarly, with $x=x_0,$ $y=y_0$ and $s=t=\tau_0$:
\begin{equation}\label{s0t0}
 |s_0-\tau_0| \le \alpha^2 L_v,
\end{equation}

where $L_v$ is the Lipschitz constant of $v$ with respect to time and space.
Using \eqref{dis2}, \eqref{y0x0} and \eqref{s0t0}, we obtain
\begin{equation}
V(x,s)-v(x,s) \le V(y_0,\tau_0)-v(x_0,s_0)+(L_v+\delta) \epsilon^2+ \alpha^2 L_v+ 2 \delta.
\label{V_v}
\end{equation}
Let us now consider three cases as suggested in \cite{FG99}. We recall that in this theorem we improve their approximation by means of the semiconcavity which turns out to be essential in the third case of the proof. However, in the first two cases we can directly obtain first order convergence. Without this property we can only prove an order of convergence of $\frac{1}{2}$.

\medskip
\paragraph{First case ($\tau_0=T$)}
In this case $V(y_0,T)=g(y_0)=v(y_0,T)$. Thus, using the Lipschitz-continuity of $g$ we obtain the desired result, setting $\alpha=\epsilon= \sqrt{\Delta t}$.

\medskip
\paragraph{Second case ($\tau_0 \neq T$, $s_0=T$)}
In this case $v(x_0,T)=g(x_0)=V(x_0,T)$. Supposing $\tau_0 \in [t_n,t_{n+1})$ and using the estimate \eqref{estimatepsi} in \eqref{V_v}, we obtain
$$
V(x,s)-v(x,s) \le C\left( |y_0-x_0| + (T-t_n)+ \Delta t \right) +(L_v+\delta) \epsilon^2+ \alpha^2 L_v+ 2 \delta.
$$
Since $\tau_0-t_n\le \Delta t$, using \eqref{s0t0} we can write that 
$$
T-t_n \le L_v \alpha^2+ \Delta t,
$$
and using \eqref{y0x0}, finally we get
$$
V(x,s)-v(x,s) \le C_3 \epsilon^2+ C_4 \alpha^2+ C_5 \Delta t+ 2 \delta.
$$
If we set $\alpha=\epsilon= \sqrt{\Delta t}$, we get the result, since $\delta$ is arbitrary.

\medskip
\paragraph{Third case ($\tau_0,s_0 \neq T$)}
We know that $v$ is a viscosity solution, this means that there exists a control $ u^* \in U$ such that
$$
-\partial_s \Phi(x_0,s_0)+ \lambda v(x_0,s_0) -f(x_0,s_0,u^*) \cdot \nabla_x \Phi(x_0,s_0) - L(x_0,s_0,u^*) \ge 0.
$$ 
Thus, we obtain

\begin{equation}\label{2}
\nabla \eta_\alpha(\tau_0-s_0)+ \lambda v(x_0,s_0)+f(x_0,s_0,u^*) \cdot (\nabla\beta_\epsilon (x_0-y_0) - \delta \nabla_x \theta(y_0,x_0)) - L(x_0,s_0,u^*) \ge 0.
 \end{equation}
By the definition of $V$ \eqref{HJBt3}, assuming $\tau_0 \in [t_n,t_{n+1})$ we have
\begin{equation}\label{method}
V(y_0,\tau_0)- (t_{n+1}-\tau_0) \, L(y_0,\tau_0,u^*)+ -e^{-\lambda(t_{n+1}-\tau_0)} V(y_0+ (t_{n+1}-\tau_0) \,f(y_0,\tau_0,u^*),t_{n+1}) \le 0.
\end{equation}
Let us introduce
\begin{equation}
\xi(y,t)=V(y_0+y,\tau_0+t)-V(y_0,\tau_0)+ (\nabla \beta_\epsilon(x_0-y_0)+ \delta \nabla_y \theta (y_0,x_0)) \cdot y + t \nabla \eta_\alpha(\tau_0-s_0)
\end{equation}
and follows that
\begin{align*}
&\xi(y+z,t+\tau)-2\xi(y,t)+\xi(y-z,t-\tau)=\\
&\qquad\quad=V(y_0+y+z,\tau_0+t+\tau)-2V(y_0+y,\tau_0+t)+ V(y_0+y-z,\tau_0+t-\tau).
\end{align*}
Proposition \ref{semicon} ensures that the function $V$ is semiconcave. Thus, it follows that also the function $\xi$ with $\xi(0,0)=0$ is semiconcave.
Let us now check the last hypothesis of Lemma \ref{lemma1}. Since $(y_0,x_0,\tau_0,s_0)$ is a maximum point for $\zeta$, we obtain
\begin{align*}
&V(y_0+y,\tau_0+t)-V(y_0,\tau_0) \le \\
&\le\beta_\epsilon(y_0-x_0) -\beta_\epsilon(y_0+y-x_0)+ \eta_\alpha(\tau_0-s_0)- \eta_\alpha(\tau_0+t-s_0) +\delta [\theta(y_0,x_0)- \theta(y_0+y,x_0)],
\end{align*}
\begin{align*}
&\xi(y,t) \le \beta_\epsilon(y_0-x_0) -\beta_\epsilon(y_0+y-x_0) +\nabla \beta_\epsilon(x_0-y_0) \cdot y +\eta_\alpha(\tau_0-s_0)\\
&- \eta_\alpha(\tau_0+t-s_0)+ t \nabla\eta_\alpha(\tau_0-s_0)+ \delta (\theta(y_0,x_0)- \theta(y_0+y,x_0)+\nabla_y \theta(y_0,x_0) \cdot y ).
\end{align*}
We note that 
$$
\limsup_{(t,y)\rightarrow (0,0)} \frac{\xi(y,t)}{|y|+|t|} \le 0.
$$
Applying Lemma \ref{lemma1} with $y=(t_{n+1}-\tau_0)\, f(y_0,\tau_0,u^*)$ and $t=t_{n+1}-\tau_0$, we obtain
\begin{equation*}
V(y_0+(t_{n+1}-\tau_0)\, f(y_0,\tau_0,u^*),t_{n+1}) \le  V(y_0,\tau_0) -(t_{n+1}-\tau_0) (\nabla \beta_\epsilon (x_0-y_0)+ \delta \nabla_y \theta(y_0,x_0)) \cdot
\end{equation*}
\begin{equation}
 f(y_0,\tau_0,u^*)- (t_{n+1}-\tau_0) \nabla \eta_\alpha(\tau_0-s_0) + C_{\xi}(t_{n+1}-\tau_0)^2 (1+ |f(y_0,\tau_0,u^*)|^2).
\label{1}
\end{equation}
Inserting \eqref{1} in \eqref{method} and dividing by $t_{n+1}-\tau_0$ we obtain
\begin{align*}
&\frac{1-e^{-\lambda (t_{n+1}-\tau_0)}}{t_{n+1}-\tau_0} V(y_0,\tau_0) \le L(y_0,\tau_0,u^*)-e^{-\lambda (t_{n+1}-\tau_0)} (\nabla \beta_\epsilon(x_0-y_0)+\\
& \delta \nabla_y\theta(y_0,x_0) ) \cdot f(y_0,\tau_0,u^*)+\nabla \eta_\alpha(\tau_0-s_0) - C(t_{n+1}-\tau_0) (1+ |f(y_0,\tau_0,u^*)|^2)) .
\end{align*}
Finally, subtracting \eqref{2}, we obtain 
\begin{align*}
&\frac{1-e^{-\lambda (t_{n+1}-\tau_0)}}{t_{n+1}-\tau_0} V(y_0,\tau_0)- \lambda v(x_0,s_0) \le L(y_0,\tau_0,u^*)-L(x_0,s_0,u^*) + \\
& \nabla \beta_\epsilon(y_0-x_0) \cdot ( -e^{-\lambda (t_{n+1}-\tau_0)} f(y_0,\tau_0,u^*)+ f(x_0,s_0,u^*))+\\
&\nabla n_\alpha (\tau_0-s_0)(1-e^{-\lambda (t_{n+1}-\tau_0)})  + \delta\left( -e^{-\lambda (t_{n+1}-\tau_0)} \nabla_y \theta (y_0,x_0) \cdot f(y_0,\tau_0,u^*)\right)\\
&+\delta\left(- \nabla_x \theta (y_0,x_0) \cdot f(x_0,s_0,u^*)\right) + C (t_{n+1}-\tau_0)(1+ |f(y_0,\tau_0,u^*)|^2)\\
& \le  L_L(|y_0-x_0|+ |\tau_0-s_0|) + 2 (L_v + \delta) L_f(|y_0-x_0|+ |\tau_0-s_0|) +2L_v \Delta t +\\
& M \delta+C\Delta t (1+ M^2_f )  \le L(\epsilon^2+\alpha^2)+C\Delta t+M \delta.
\end{align*}
Since $\delta$ is arbitrary, choosing $\alpha=\epsilon=\sqrt{\Delta t}$, we obtain the assertion. $\Box$
\subsection*{Error estimate for a discrete control space}
In the previous results we did not take into account the error in the minimization procedure. To make our algorithm computationally feasible, we replace the continuous set of controls $U$ by a discrete set $U^{\Delta u}$ in order to compute the minimum by comparison over $U^{\Delta u}$.
Let us define the value function computed with a discrete set of controls as
\begin{equation}
\overline{V}(x,t)= \inf_{u \in \overline{\mathcal{U}}^\Delta} J_{x,t}^{\Delta t}(u),
\end{equation}
where
$$
\overline{\mathcal{U}}^\Delta=\{ u:[t,T) \rightarrow U^{\Delta u}, \mbox{ such that } u(s)= \sum_{k=n}^{\overline{N}-1} \alpha_k \chi_{[t_k,t_{k+1})}(s) \}.
$$
 We can obtain an error estimate for the comparison error adding the hypothesis of Lipschitz-continuity for $f$ and $L$ with respect to the state and the control uniformly in time, i.e.

\begin{align}
\begin{aligned}\label{Lf2}
&|f(x,u,s)-f(y,\tilde{u},s)| \le L_f (|x-y| + |u-\tilde{u}|),
\cr
 &|L(x,u,s)-L(y,\tilde{u},s)| \le L_L (|x-y| + |u-\tilde{u}|),\cr
&\quad\qquad\forall \, x,y \in \mathbb{R}^d, u,\tilde{u} \in U \subset \mathbb{R}^m, s \in [t,T].
\end{aligned}
\end{align}

\begin{proposition}
Under the assumptions \eqref{Lf2} and \eqref{Lg}, then
\begin{equation}
\sup_{(x,t) \in \mathbb{R}^d \times [0,T]}|V(x,t)-\overline{V}(x,t)| \le C(T,m) \Delta u,
\end{equation}
where $m$ is the dimension of the control set $U$. 
\end{proposition}

{\em Proof.}
First, we can observe that $\overline{\mathcal{U}}^\Delta \subset \mathcal{U}^\Delta$, then $V(x,t)\le \overline{V}(x,t)$.
Then, supposing $t \in [t_n, t_{n+1})$ and imposing $\lambda=0$ for simplicity, we obtain
$$
\overline{V}(x,t)-V(x,t) \le  \overline{V}^{n+1}(x + (t_{n+1}-t) f(x,\overline{u}^n,t))  + (t_{n+1}-t) L(x,\overline{u}^n,t)
$$
$$
- V^{n+1}(x + (t_{n+1}-t) f(x,u_*^n,t))  - (t_{n+1}-t) L(x,u_*^n,t),
$$
where
$$
u_*^n= \argmin_{u \in U} \{ V^{n+1}(x + (t_{n+1}-t) f(x,u,t)  +(t_{n+1}-t) L(x,u,t) \}
$$
and $\overline{u}^n$ is chosen such that $|\overline{u}^n-u_*^n| \le \frac{\sqrt{m}}{2} \Delta u$. This choice is possible since $U^{\Delta u}$ is a discretization of $U$ with step-size $\Delta u$ in all directions.
Then, if we carry on as Proposition $2.1$ in \cite{AFS18}, we will obtain
\begin{equation}
\overline{V}(x,t)-V(x,t) \le  L_L \sum_{k=n}^N \alpha_k \left( |\overline{x}^k - x_*^k| +|\overline{u}^k-u_*^k| \right) +L_g |\overline{x}^N- x_*^N|,
\label{VV}
\end{equation}
where
$$
\overline{x}^k:= x+ \sum_{j=n}^{k-1} \alpha_j f(\overline{x}^j, \overline{u}^j,\bar{t}_j), \quad
x_*^k=x +\sum_{j=n}^{k-1} \alpha_j f(x_*^j, u_*^j,\bar{t}_j),
$$
$$
\alpha_j= \begin{cases}
		 t_{n+1}-t & j=n\\
		 \Delta t & k \ge n+1
		 \end{cases} ,
		 \quad
\bar{t}_j= \begin{cases}
		 t & j=n\\
		 t_k & j \ge n+1
		 \end{cases} ,
$$
$$
u^{j}_*= \argmin_{u \in U} \{ V^{j+1}(x_*^j + \alpha_j f(x_*^j,u,\bar{t}_j))  + \alpha_j L(x_*^j,u,\bar{t}_j) \},\quad j \ge n,
$$
and $\overline{u}^{j}$ chosen such that
 $$
 |\overline{u}^{j}-u^{j}_*| \le \frac{\sqrt{m}}{2} \Delta u, \quad j \ge n.
 $$

By Gr\"onwall's lemma we obtain
\begin{equation}
|\overline{x}^k-x_*^k | \le e^{(t_k-t)L_f} (t_k-t) L_f \frac{\sqrt{m}}{2} \Delta u, \quad j \ge n,
\label{gron}
\end{equation}
and finally coupling \eqref{VV} and \eqref{gron} 
\begin{equation}
\sup_{(x,t) \in \R^d \times [0,T]} |\overline{V}(x,t)-V(x,t)| \le  \Delta u \frac{\sqrt{m}}{2} \left( e^{T L_f}T (L_L +L_g) +T L_L \right).\Box
\end{equation}

\begin{rmk}
If the function $f$ has a sub-linear growth, $i.e.$ there exist two constants $K_1$ and $K_2$ such that
$$
|f(x,u,s)| \le  K_1 |x| + K_2, \quad \forall\, x \in \mathbb{R}^d, u \in U \subset \mathbb{R}^m, s \in [t,T], 
$$
then the trajectory $y(s,u)$ lives in a compact set for all $s \in [t,T]$ and $u\in \mathcal{U}$ . Thus, the hypothesis of boundedness \eqref{Mf} can be avoided and the conditions \eqref{Lf}-\eqref{Cf} can be considered locally, i.e. holding on every compact subset.   
\end{rmk}

\subsection*{Error estimate for the TSA with pruning}
\label{sec:eep}
In the previous section we presented an error estimate for the TSA where a first order of convergence is achieved. However, as shown numerically in \cite{AFS18}, one can obtain the same order of convergence in the case of the pruned tree if the pruning tolerance $\ep$ in \eqref{tol_cri} is chosen properly. In this section, we extend the theoretical results of Section \ref{sec:eee} to the pruning case. Thus, let us define the {\em pruned trajectory:} 
\begin{equation}
\eta^{n+1}_{i_n}= \eta^n_{i_{n-1}}+ \Delta t f(\eta^n_{i_{n-1}}, u_{j_n},t_n) + \mathcal{E}_{\ep}(\eta^n_{i_{n-1}}+ \Delta t f(\eta^n_{i_{n-1}}, u_{j_n},t_n),\{\eta^{n+1}_{i}\}_{i}),
\label{perturbed}
\end{equation} 
where the indices $i_n$ and $j_n$ consider the pruning strategy with

\begin{equation}
\mathcal{E}_{\ep}(x,\{x_n\})= 
\begin{cases}
x_k-x & \mbox{ if } k\in\argmin\limits_n |x-x_n| \mbox{ and } |x-x_k|\le\ep, \\
0 & \mbox{otherwise.}
\end{cases}
\label{perturbation}
\end{equation}
The function $\mathcal{E}_{\ep}(x,\{x_n\})$ can be interpreted as a perturbation of the numerical scheme and $|\mathcal{E}_{\ep}(x,\{x_n\})| \le \ep$. As already done in \eqref{apy}, we consider the piecewise constant extension $\tilde{\eta}(s;u)$ of the approximation such that
\begin{equation}
\tilde{\eta}(s,u):={\eta}^{[s/ \Delta t]}(u)\quad s \in [t,T].
\end{equation}
First step is to prove that the tolerance must be chosen properly to guarantee a first order convergence of the scheme. The following result is obtained easily through Gr\"onwall's lemma. 
\begin{proposition}
Given the approximation $\tilde{y}(s;u,x,t)$ of equation \eqref{eq} and its perturbation $\tilde{\eta}(s;u,x,t)$ expressed in \eqref{perturbed}, then
\begin{equation}
|\tilde{y}(s;u,x,t)-\tilde{\eta}(s;u,x,t)| \le \ep  \frac{s-t}{\Delta t} e^{L_f(s-t)}, \quad
\forall s \in [t, T].
\label{error_pruning}
\end{equation}
Finally, to guarantee first order convergence, the tolerance must be chosen such that
\begin{equation}\label{tolerance}
\ep \le C \Delta t^2.
\end{equation}
\end{proposition}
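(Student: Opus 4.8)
The plan is to treat the pruning operation $\mathcal{E}_{\ep}$ as a bounded one-step perturbation of the forward Euler scheme and run a discrete Gr\"onwall argument comparing the pruned trajectory $\tilde{\eta}$ with the unperturbed approximation $\tilde{y}$ generated by the \emph{same} control $u \in \mathcal{U}^\Delta$. Writing $e^n := |y^n(u)-\eta^n(u)|$ and subtracting the Euler recursion from \eqref{perturbed}, the two trajectories share the same initial datum, so $e^0=0$, while at each step the Lipschitz bound \eqref{Lf} on $f$ together with $|\mathcal{E}_{\ep}(x,\{x_n\})| \le \ep$ yields
\begin{equation*}
e^{n+1} \le (1 + L_f \Delta t)\, e^n + \ep .
\end{equation*}

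First I would solve this linear recursion explicitly. Since $e^0=0$, induction gives $e^n \le \ep \sum_{k=0}^{n-1}(1+L_f\Delta t)^k$. Bounding each of the $n$ summands by its largest value $(1+L_f\Delta t)^{n-1}$ and using $(1+L_f\Delta t)^{n-1}\le e^{L_f \Delta t\, n}=e^{L_f(t_n-t)}$ (exactly as in the proof of Proposition \ref{prop1}) produces $e^n \le n\,\ep\, e^{L_f(t_n-t)}$, which after passing to the piecewise constant extensions is \eqref{error_pruning} on $s \in [t_n,t_{n+1})$. This part is essentially routine; the only point requiring care is that the perturbation enters additively at every level, so the accumulated factor is linear in $n$, the number of prunings performed so far, rather than exponential.

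For the tolerance condition I would combine \eqref{error_pruning} with the Euler estimate \eqref{euler1}: by the triangle inequality the distance between the exact trajectory and the pruned one obeys $|y-\tilde{\eta}| \le |y-\tilde{y}|+|\tilde{y}-\tilde{\eta}| \le (M_f \Delta t + n\,\ep)\, e^{L_f(t_n-t)}$. The Euler contribution is already $O(\Delta t)$, so first order is retained precisely when the pruning contribution $n\,\ep\, e^{L_f(t_n-t)}$ is also $O(\Delta t)$. Since $n \le \overline N = (T-t)/\Delta t$, we have $n\,\ep \le (T-t)\,\ep/\Delta t$, and requiring this to be $O(\Delta t)$ forces $\ep/\Delta t = O(\Delta t)$, that is $\ep \le C_{\ep}\Delta t^2$. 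With this choice $n\,\ep\, e^{L_f(t_n-t)} \le C_{\ep}(T-t)\, e^{L_f T}\,\Delta t$, so the perturbed trajectory — and hence, repeating the estimates of Theorems \ref{unverso} and \ref{secondoverso} with $\tilde{\eta}$ in place of $\tilde{y}$, the pruned value function — still converges at first order.

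The main obstacle here is conceptual rather than computational: one must recognize that the worst case is $n=\overline N \sim 1/\Delta t$, where the linear-in-$n$ growth of the pruning error consumes one power of $\Delta t$, so that second-order smallness of $\ep$ is exactly what is needed to recover first order overall. Everything else reduces to the discrete Gr\"onwall inequality already invoked for Proposition \ref{prop1}.
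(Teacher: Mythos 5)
Your argument is correct and is essentially the paper's own proof: the paper simply remarks that the estimate follows from (discrete) Gr\"onwall's lemma, which is exactly the recursion $e^{n+1}\le (1+L_f\Delta t)e^n+\ep$ with $e^0=0$ that you solve, and the tolerance condition \eqref{tolerance} is obtained just as you argue, from $n\le \overline N\sim 1/\Delta t$ so that $n\,\ep=O(\Delta t)$ forces $\ep\le C_{\ep}\Delta t^2$. No gaps; your write-up is a fully detailed version of the paper's one-line justification.
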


Then we can define the {\em pruned} discrete cost functional as
\begin{align}
\begin{aligned}
J^{\Delta t,P}_{x,s}(u)&=(t_{n+1}-s) L(x, u, s)+ \Delta t \sum_{k=n+1}^{N-1} L(\eta^k, u, t_k) e^{-\lambda (t_k-s)} \\
&+ g(\eta^{\overline{N}})e^{-\lambda (t_N-s)},
\end{aligned}
\end{align}
for $s\in [t_n,t_{n+1})$  and define the {\em pruned} discrete value function as
$$
V^{P}(x,t):=\inf_{u \in \mathcal{U}^\Delta} J_{x,t}^{\Delta t,P}(u)
$$
which now satisfies the following equation 
\begin{align}
\begin{aligned}\label{HJBt4}
&V^P(x,s) = \min\limits_{u\in U} \left\{e^{-\lambda (t_{n+1}-s)} V^{P}\left(\eta_u^{n+1}(x),t_{n+1}\right) +(t_{n+1}-s)  L(x,u,s) \right\}, \\
&V^P(x,T) = g(x), \qquad x \in \mathbb{R}^d, s \in [t_n,t_{n+1}),
\end{aligned}
\end{align}
where $\eta_u^{n+1}(x) = x+(t_{n+1}-s) f(x,u,s)+ \mathcal{E}_{\ep}(x+(t_{n+1}-s) f(x,u,s),\{\eta^{n+1}_{i}\}_{i})$. Then, we can prove the following result, similar to Theorem \ref{unverso}.
\begin{proposition}
Under the hypothesis of Theorem \ref{unverso} and condition \eqref{tolerance}, we have
\begin{equation}
|V(x,t)-V^P(x,t)| \le C^*(T) \Delta t.
\label{value_pruned}
\end{equation}
\end{proposition}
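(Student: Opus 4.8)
The plan is to follow the same template as Theorem~\ref{unverso}, exploiting that both $V$ and $V^P$ are infima of cost functionals over the \emph{same} admissible set $\mathcal{U}^\Delta$. Since $V(x,t)=\inf_{u}J^{\Delta t}_{x,t}(u)$ and $V^P(x,t)=\inf_{u}J^{\Delta t,P}_{x,t}(u)$, the elementary inequality $|\inf_u A(u)-\inf_u B(u)|\le \sup_u|A(u)-B(u)|$ reduces the claim to the pointwise bound
\[
|V(x,t)-V^P(x,t)|\le \sup_{u\in\mathcal{U}^\Delta}\left|J^{\Delta t}_{x,t}(u)-J^{\Delta t,P}_{x,t}(u)\right|.
\]
I would then fix an arbitrary $u\in\mathcal{U}^\Delta$ and estimate the right-hand side directly, so that the entire argument rests on the perturbation bound \eqref{error_pruning} already established for the pruned dynamics.

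First I would observe that the two functionals share the leading term $(t_{n+1}-s)L(x,u,s)$, because the pruned and unpruned trajectories both start at $x$; hence their difference only involves the running cost evaluated along $y^k$ versus $\eta^k$ together with the terminal term $g(y^{\overline{N}})-g(\eta^{\overline{N}})$. Using the Lipschitz continuity of $L$ and $g$ from \eqref{Lf} and \eqref{Lg}, together with $e^{-\lambda(\cdot)}\le 1$ (since $\lambda\ge 0$), this yields
\[
\left|J^{\Delta t}_{x,t}(u)-J^{\Delta t,P}_{x,t}(u)\right|\le L_L\,\Delta t\sum_{k=n+1}^{\overline{N}-1}|y^k-\eta^k|+L_g\,|y^{\overline{N}}-\eta^{\overline{N}}|.
\]
At this point I would insert \eqref{error_pruning}, namely $|y^k-\eta^k|\le k\,\ep\,e^{L_f(t_k-t)}\le k\,\ep\,e^{L_fT}$, and invoke the tolerance condition \eqref{tolerance}, $\ep\le C_{\ep}\Delta t^2$.

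The decisive step, and the only genuinely delicate point, is the book-keeping of the two competing factors. The trajectory error \eqref{error_pruning} grows \emph{linearly} in the time level $k$, and since $k\le \overline{N}$ with $\overline{N}\,\Delta t\le T$ we have $k\le T/\Delta t$; thus a single node contributes $k\,\ep\,e^{L_fT}\le T C_{\ep} e^{L_fT}\,\Delta t$, which is already $O(\Delta t)$. The subtlety is that the running-cost sum contains $O(1/\Delta t)$ such terms, so naively it could be $O(1)$; however the quadrature weight $\Delta t$ in front of the sum, combined with $\ep=O(\Delta t^2)$, exactly compensates this through $\Delta t\,\overline{N}\le T$. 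Carrying this out gives $\Delta t\sum_{k}|y^k-\eta^k|\le T^2 C_{\ep} e^{L_fT}\Delta t$ and $L_g|y^{\overline{N}}-\eta^{\overline{N}}|\le L_g T C_{\ep}e^{L_fT}\Delta t$, whence the claim follows with $C^*(T)=C_{\ep}e^{L_fT}T(L_L T+L_g)$. This computation also clarifies why \eqref{tolerance} is the sharp scaling: any weaker choice such as $\ep=O(\Delta t)$ would leave the accumulated pruning error at order $O(1)$ in the sum and destroy first order convergence.
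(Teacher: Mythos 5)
Your proposal is correct and follows essentially the same route as the paper: reduce $|V-V^P|$ to $\sup_{u\in\mathcal{U}^\Delta}\left|J^{\Delta t}_{x,t}(u)-J^{\Delta t,P}_{x,t}(u)\right|$ as in Theorem~\ref{unverso}, apply the Lipschitz continuity of $L$ and $g$, and insert the perturbation bound \eqref{error_pruning} together with the tolerance scaling \eqref{tolerance}, arriving at the same constant $C^*(T)=T C_{\ep}e^{L_fT}\left(T L_L+L_g\right)$. The only cosmetic difference is that you keep the running cost as a discrete sum with the counting argument $k\le T/\Delta t$, $\overline{N}\Delta t\le T$, whereas the paper bounds the equivalent integral of the piecewise constant extensions $\tilde{y},\tilde{\eta}$; the two bookkeepings are interchangeable.
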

%

Finally by triangular inequality and using estimate \eqref{eq:res} and \eqref{value_pruned}, we obtain the desired result:
\begin{equation}
|v(x,t)-V^P(x,t)| \le \left(C^*(T)+\widehat{C}(T)\right) \Delta t.
\end{equation}
whenever condition \eqref{tolerance} holds true.

\subsection{Pruning in the linear case}
In this section we provide an estimate on the cardinality of the pruned tree in the case of  linear dynamical systems:
\begin{equation}
\dot{y}(t)= Ay+ Bu,
\label{axbu}
\end{equation}
where the control $u$ is 1-dimensional for simplicity and $A\in\R^{d\times d}, B\in\R^{d}.$
Discretizing \eqref{axbu} using a one-step scheme, we obtain the following approximation
\begin{equation}
y^{n+1}=S_{\Delta} y^n+\Delta t \widetilde{S}_{\Delta} B u^n,
\label{disc_dyn}
\end{equation}
where $S_{\Delta}=I_n+\Delta t A$  and $\widetilde{S}_{\Delta}=I_n$ for an explicit Euler scheme, whereas $S_{\Delta}=\widetilde{S}_{\Delta}=(I_n-\Delta t A)^{-1}$ for Implicit Euler.  The matrix $I_n \in \mathbb{R}^{n \times n}$ is the identity matrix. 
 We will write $y^{n+1}_u$ with $u=(u^0,\ldots, u^n) \in (U^{\Delta u})^{n+1}$ to stress the dependence of the state $y^{n+1}$ on the discrete controls $\{u^i\}_{i=0}^n$. Here, we fix $U^{\Delta u}=[u_{min},u_{max}]$.
Let us state some results related to the application of the pruning criterion to this particular case. In what follows, we are going to fix the pruning tolerance $\epsilon_\mathcal{T}=C \Delta t^2$ which guarantees a first order convergence for the entire procedure.

\begin{proposition}
Let us consider two different discrete evolutions $y^{n}_u$ and $\widetilde{y}^{n}_{\widetilde{u}}$, where $u=(u^0,\ldots, u^{n-3}, u^{n-2},u^{n-1})$ and $\widetilde{u}=(u^0,\ldots, u^{n-3}, \widetilde{u}^{n-2}, \widetilde{u}^{n-1})$ such that 
\begin{equation}
u^{n-2}+ u^{n-1}=\widetilde{u}^{n-2}+ \widetilde{u}^{n-1}.
\label{sum_u}
\end{equation}
Then, $y^{n}_u$ and $\widetilde{y}^{n}_{\widetilde{u}}$ satisfy the pruning criterion \eqref{tol_cri} if 
\begin{equation}
\left\| (S_{\Delta}- I_n) \widetilde{S}_{\Delta} B \right\| |u^{n-1}-\widetilde{u}^{n-1}| \le C \Delta t .
\label{u12}
\end{equation}
Moreover, if \eqref{u12} holds for every pair $(u^{n-1}, \widetilde{u}^{n-1}) \in U^\Delta \times U^\Delta$, the pruning criterion is satisfied by every pair of nodes ($y^{n}_u$, $\widetilde{y}^{n}_{\widetilde{u}}$) such that
  \begin{equation}
  \sum_{i=0}^{n-1} u^i = \sum_{i=0}^{n-1} \widetilde{u}^i.
  \label{sum_pru}
  \end{equation}
Then, the levels of the tree grow at most linearly and the cardinality of the pruned tree $\mathcal{T}_P$ is bounded by the following estimate
$$
|\mathcal{T}_P| \le (M-1) \frac{\overline{N}(\overline{N}+1)}{2} +\overline{N}+1,
$$
or equivalently,
\begin{equation}
|\mathcal{T}_P| \le (M-1) \frac{(T-t)^2+(T-t) \Delta t}{2 \Delta t^2} +\frac{T-t}{\Delta t}+1,
\label{bound_card}
\end{equation}

where $\overline{N}=\frac{T-t}{\Delta t}$ is the number of time steps, $M$ is the number of discrete controls, $t$ is the initial time and $T$ is the final time.
\label{prop_quad}
\end{proposition}

\begin{proof}
By the definition of the discrete dynamics \eqref{disc_dyn} and equality \eqref{sum_u}, we obtain
$$
\| y^{n}_u-\widetilde{y}^{n}_{\widetilde{u}}\| \le \Delta t \left\| (S_{\Delta}- I_n) \widetilde{S}_{\Delta} B \right\| |u^{n-1}-\widetilde{u}^{n-1}|\le C\Delta t^2,
$$

where we have used condition \eqref{u12} in the last inequality. This proves that the pruning criterion \eqref{tol_cri} is fulfilled.\\
Let us pass to the second part of the proposition. 
Using assumption \eqref{sum_pru}, the term $u^0$ can be written as
 $$
 u^0=\sum_{i=0}^{n-1}\widetilde{u}^i-\sum_{i=1}^{n-1} u^i .
 $$
Then, applying the first part of the proposition, $u$ satisfies the pruning criterion with the vector
 $$
 \left(\widetilde{u}^0, \sum_{i=1}^{n-1}\widetilde{u}^i-\sum_{i=2}^{n-1} u^i , u^3, \ldots, u^{n-1} \right).
 $$
 The procedure can be iterated for consecutive components, 
 proving that $y^n_u$ and $y^n_{\widetilde{u}}$ satisfy the pruning criterion.
 We can pass to the final part of the proposition. First of all we note that $|\mathcal{T}^0_P|=1$ and $|\mathcal{T}^1_P|=M$ by construction.
For a general level $n$, we note that the cardinality of the pruned level $\mathcal{T}^n_P$ is less or equal to the cardinality of the set of vectors $(u^0,\ldots, u^{n-1})  \in [U^\Delta]^n$ with distinct sums. Indeed, if two vectors of controls $u,\widetilde{u} \in [U^\Delta]^n$ have the same sum, by the previous result we know that the corresponding discrete evolution $y^n_u$ and $\widetilde{y}^n_{\widetilde{u}}$ merge via the pruning criterion. The cardinalities of these two sets may be different if the pruning strategy cuts further nodes. 
 Hence, by combinatorial computations we obtain that $|\mathcal{T}^n_P| \le n(M-1)+1$ for $n \ge 0$.
  Finally, we observe
$$
|\mathcal{T}_P| =\sum_{i=0}^{\overline{N}}|\mathcal{T}^i_P |\le (M-1) \frac{\overline{N}(\overline{N}+1)}{2} +\overline{N}+1,
$$
 proving the result.

\end{proof}
\begin{rmk}
In the case of Explicit Euler, condition \eqref{u12} is equivalent to
$$
\Delta t \left\| A B \right\| |u^{n-1}-\widetilde{u
}^{n-1}| \le C \Delta t,$$
which is satisfied for every $\Delta t>0$ and every pair $(u^{n-1},\widetilde{u
}^{n-1}) \in U^\Delta \times U^\Delta$ fixing the constant 
$$
C=\left\| A B \right\| |u_{max}-u_{min}|.
$$

This constant $C$ may be very large, affecting the cardinality of the tree. Indeed, according to \eqref{bound_card}, the cardinality of the pruned tree grows as $1/\Delta t^2$. Hence, for an accuracy $\varepsilon$, the bound $C\Delta t<\varepsilon$ implies that the $|\mathcal{T}_P|$ grows as $C^2/\varepsilon^2$, showing the dependence of the cardinality on the constant $C$.

\end{rmk}

\section{Numerical Tests}\label{sec:test}
We now provide a numerical illustration of our theoretical results by means of two test cases. We first deal with the control of the heat equation and compare our approximate value function by a very accurate simulation of the Riccati equation. We, then, compute the order of convergence of our method. The second example deals with an advection equation with a bilinear control. Both examples consider a high dimensional problem.
 The numerical simulations reported in this paper are performed on a laptop with one core of an Intel Core i5-3, 1 GHz and 8GB RAM. The codes are written in Matlab. 

\subsection*{Test 1: Heat Equation}\label{sec:test42}

In the first test we deal with the control of the linear heat equation:
\begin{equation}
\begin{cases}
z_t=\sigma z_{xx}+ z_0(x) u(t), & (x,t) \in [0,1] \times [0,T], \\
 z(0,t)=z(1,t)=0, & t \in  [0,T], \\
 z(x,0)=z_0(x), & x \in [0,1],
\end{cases}
\label{heat}
\end{equation}
where the term $z_0(x)u(t)$ allows to introduce a spatial dependence to the control input. We set $T=1$, $\sigma=0.15$ and $z_0(x)=x-x^2.$ The discretization of \eqref{heat} with a centered finite difference method leads to a system of ODEs: 
\begin{equation}
\begin{cases}
\dot{y}(t)&=A y(t) + B u(t),\\
y(0) &=y_0
\label{ODE}
\end{cases}
\end{equation}
where $A\in\R^{d\times d}$ is the stiffness matrix and $B\in\R^d$ is given by  $B_i=z_0(x_i)$ for $i=1,\ldots,d$, and $x_i$ are the points of the spatial grid. The system of ODEs \eqref{ODE} is solved via an implicit Euler scheme. We refer to \cite{L07} for a complete description of finite difference methods for PDEs.

The cost functional we want to minimize reads:
\begin{equation*}
J_{y_0,t}(u)= \int_{t}^T \left( \|y(s)\|_2^2+  \frac{1}{100}|u(s)|^2 \right)\, ds +  \|y(T)\|_2^2.
\end{equation*}
For this problem we can derive an {\em exact} solution solving the well-known Riccati differential equation as in \cite{BBKS18,BIM12,S16}, if the control is unconstrained. We will compare the numerical value function computed by the TSA and by the Riccati equation. We will compute the following relative errors
$$
Err_{2}=\sqrt{ \frac{\sum_{n=0}^{\overline{N}} |V(y^n_*,t_n)-v(y^n_{R},t_n)|^2}{\sum_{n=0}^{\overline{N}} |v(y^n_{R},t_n)|^2}}, \quad
 Err_{\infty}=   \frac{ \max\limits_{n=0,...,{\overline{N}}} |V(y^n_*,t_n)-v(y^n_{R},t_n)|}{  \max\limits_{n=0,...,{\overline{N}}} |v(y^n_{R},t_n)|},
$$ 
where $\{y^n_*\}_n$ is the optimal trajectory computed via TSA, whereas $\{y^n_{R}\}_n$ is obtained solving the Riccati equation. In this example the control space is $U=[-1,0]$. This control space is derived by taking control bounds of the LQR problem. We recall that the control set in the LQR problem is unbounded whereas in the TSA it is bounded.
We set the dimension of \eqref{ODE}, $d=100$. We consider a time step equal to $\Delta t = 10^{-4}$ for the Riccati equation to obtain an accurate solution. To obtain an efficient pruning as explained in Remark \ref{efficient}, we construct a coarse tree using $\Delta t = 0.1$ and $2$ controls. Then, we build a snapshot matrix $Y$, where its columns correspond to the nodes of the coarse tree. We compute the Singular Value Decomposition of the matrix $Y$ and we consider the first left singular vector $\Psi$. In the construction of the tree, the nodes are first projected using the operator $\mathcal{P}:=\Psi^T$ in Remark \ref{efficient} and, then, compared to each other. Thus, the projected nodes that satisfy the pruning criterion are then compared in the original dimension. We recall that the vector $\Psi$ is the direction of maximum variance. This choice will reduce the numbers of comparisons in the full dimension. 

The pruning tolerance $\epsilon=C \Delta t^2$ limits the growth of tree to be at most quadratic with respect to the ${\overline{N}}$ time steps. The constant $C=2$ turns out to satisfy condition \eqref{u12} for every pair of controls in $U^{\Delta}$ and every choice of $\Delta t$ considered in the test. In the left panel of Figure \ref{fig3:cost11} we show that the cardinality of the pruned tree $\mathcal{T}_P$ for different time steps and 100 discrete controls. It is possible to note that the theoretical bound \eqref{bound_card} is satisfied and it turns out that the pruning strategy is cutting more nodes than predicted by Proposition \ref{prop_quad}.
A study of the order of convergence for the TSA with $100$ discrete controls is provided in Table \ref{test2:11contr}. The control space, in the LQR problem, is not discretized and that may provide some different results as the order of convergence. The order of convergence is around $1$ as expected with a pruned tree with tolerance $\ep = 2\Delta t^2$. The table also shows the number of nodes for each $\Delta t$ (second column) and how the cardinality of tree scales with respect to $\Delta t$ (third column). To give an idea of the importance of the pruning we mention that the ratio of the cardinality for a full tree between $\Delta t= 0.0125$ and $\Delta t= 0.025$ is of order $10^{80}$ whereas the order with a pruned tree is $34.67$. It is clear the huge difference and feasibility of our approach with a pruning criterion. This is due to the fact that the tree lives on a lower-dimensional manifold.

	\begin{table}[htbp]
		\centering
		\begin{tabular}{cccccccc}
					\toprule
					 $\Delta t$ & Nodes &  Nodes Ratio & CPU & $Err_{2}$ &  $Err_{\infty} $ & $Order_{2}$  & $Order_{\infty}$  \\
					\midrule
					0.1 &     94 & 	  &  \phantom{x}0.69s&    0.170 &   0.184 &   & \\
					0.05 &   442 &   \phantom{x}4.70	 &  \phantom{x} 3.03s &   7.8e-2 &  8.9e-2 &   1.12 &   1.04 \\
					0.025 &   3632 & \phantom{x}8.51 &  \phantom{x} 27s & 3.9e-2  & 4.1e-2  &  1.17 &    1.22 \\
					0.0125 &  130767 & \phantom{x}34.67 &  \phantom{x}1100s & 9.7e-3   &   1.3e-2 &  2.00 &  1.65  \\
			\bottomrule		
		\end{tabular}
   	\caption{Test 1: Error analysis and order of convergence for Implicit Euler scheme of the TSA with $\ep= 2\Delta t^2$ and 100 discrete controls.   }
	\label{test2:11contr}
	\end{table}

Finally, in the central panel of Figure \ref{fig3:cost11} we show the convergence of the cost functional decreasing the temporal step size $\Delta t$ for a given amount of controls ($100$ in the plot). As $\Delta t$ gets smaller, we better approximate our reference solution. In the right panel we show the optimal policy. Clearly, since our control space is not continuous, a chattering phenomenon appears, however it does not improve significantly the quality of our results.

\begin{figure}[htbp]
\centering
\includegraphics[scale=0.36]{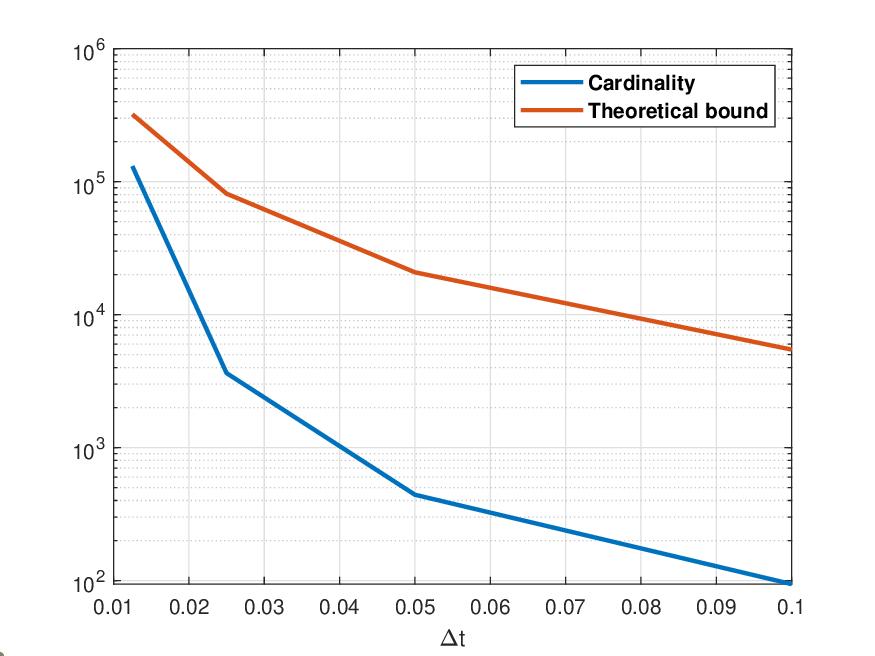}
       \includegraphics[scale=0.36]{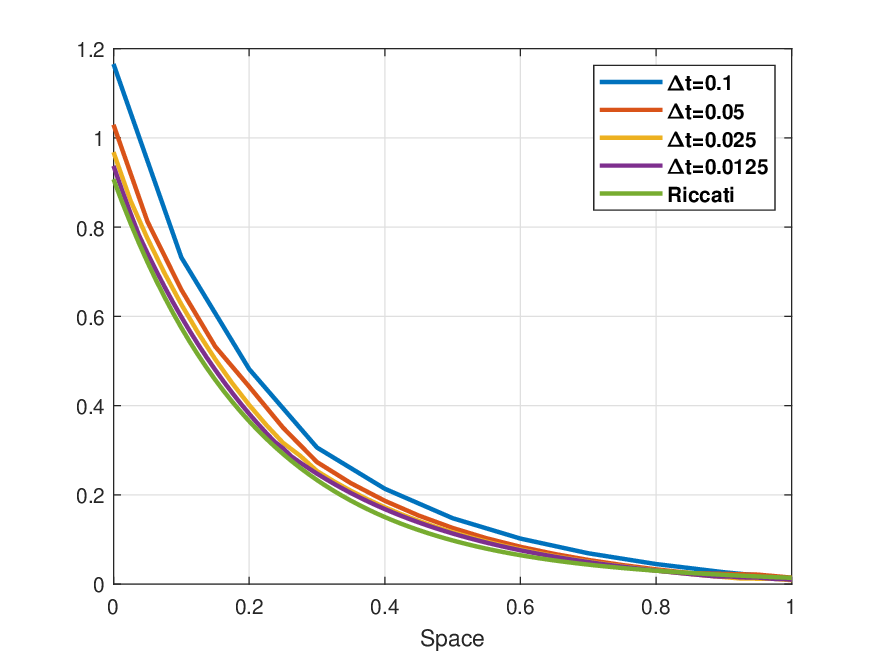}
       \includegraphics[scale=0.36]{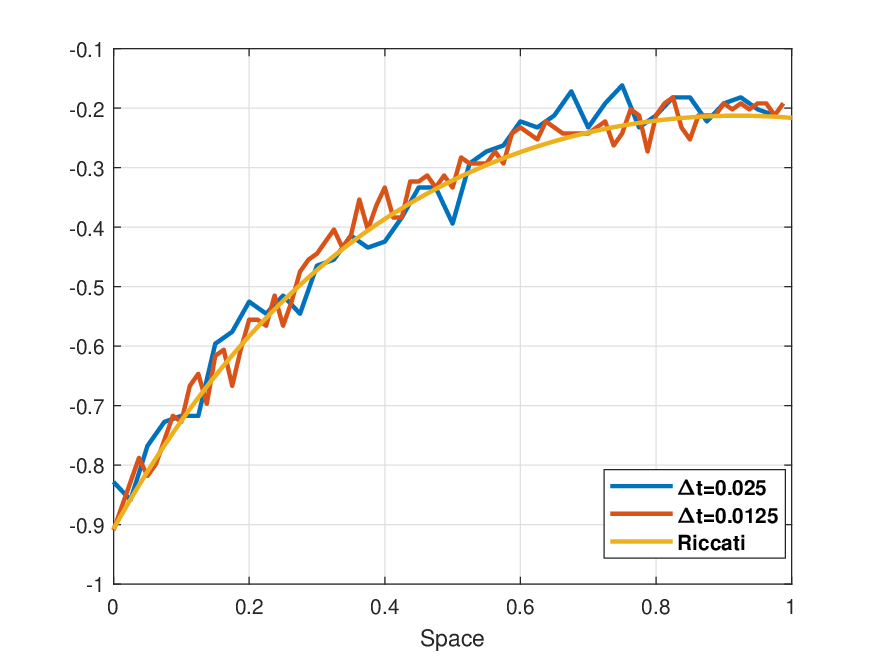} 
       \caption{Test 1: Cardinality of the trees compared with the theoretical estimate \eqref{bound_card} in logarithmic scale (left), cost functional (central) and optimal control (right) with $100$ discrete controls.}
       \label{fig3:cost11}
	\end{figure}

In Table \ref{test1:contr_var} we study the behaviour of the algorithm increasing the number of discrete controls. We observe that halving the control step the CPU time doubles and both errors decrease. We do not observe a first order convergence in the control discretization because for this problem a finer control space does not influence strongly the quality of our approximation.
	\begin{table}[htbp]
		\centering
		\begin{tabular}{cccccccc}
					\toprule
					 Discrete controls & Nodes & CPU & $Err_{2}$ &  $Err_{\infty} $ & $Order_{2}$  & $Order_{\infty}$  \\
					\midrule
4 & 2803 & 0.29s & 0.173 & 6.1e-2 &  &  \\
7 & 3101 & 0.55s & 0.138 & 5.2e-2 & 0.33 & 0.23 \\
13 & 3351 & 1.02s & 0.136 & 5.1e-2 & 0.02 & 0.03  \\
25 & 3370 & 2.03s & 0.100 & 4.3e-2 & 0.44 & 0.25 \\
49 & 3655 & 4.26s & 6.8e-2 & 3.6e-2 & 0.55 & 0.26  \\
			\bottomrule		
		\end{tabular}
   	\caption{Test 1: Error analysis and order of convergence for Implicit Euler scheme of the TSA with $\ep= 2\Delta t^2$ and $\Delta t=0.025$.   }
	\label{test1:contr_var}
	\end{table}

\subsection{Test 2: Advection equation}

The second test deals with the optimal control of the advection equation:
\begin{equation}
\begin{cases}
y_t+c y_{x}= y u(t) & (x,t) \in [0,3] \times [0,T], \\
 y(0,t)=y(3,t)=0 & t \in  [0,T], \\
 y(x,0)=y_0(x) & x \in [0,3],
\end{cases}
\label{transport}
\end{equation}
where $c=1.5$, $T=1$ and $y_0(x)=\sin(\pi x) \chi_{[0,1]}(x)$.  To discretize equation \eqref{transport}, we consider an upwind scheme in space, whereas we will apply an implicit Euler scheme in time (see \cite{L07}). We will minimize the following tracking cost functional:

\begin{equation*}
J_{y_0,t}(u)= \int_{t}^T  \int_0^3 \|y(x,s)-y_0(x-cs)\|^2 dx ds +  \int_0^3  \|y(x,T)-y_0(x-cT)\|^2 dx,
\end{equation*}

where the desired state is the solution of the uncontrolled problem, i.e.  $u\equiv 0$. The exact uncontrolled dynamics preserves over time the $L^\infty$ and $\L^2$ norm, while the upwind approximation introduces a term of numerical diffusion (see e.g. \cite{L07}). Specifically, in this case the control $u \in [0,1]$ will reduce the numerical diffusion of the scheme. To solve the control problem we use the TSA with $U=\{0,1\}$ and the following pruning criterion: $\ep=\Delta t^2$. We apply again the efficient pruning (see Remark \ref{efficient}), building the snapshots matrix with $\Delta t=0.1$ and $2$ controls.
In Table \ref{test2:2contr} we compute the error and order of convergence. In this case the exact value function is $v(t,x)=0$ obtained with the control $u\equiv 0$ in \eqref{transport}. As expected, we achieve first of order convergence.

\begin{table}[htbp]
		\centering
		\begin{tabular}{cccccccc}
					\toprule
					 $\Delta t$ & Nodes & CPU & $Err_{2}$ &  $Err_{\infty} $ & $Order_{2}$  & $Order_{\infty}$  \\
					\midrule
		0.05		 &  231 &	0.01s & 0.1 & 0.11 &  & \\
 0.025 &   861 & 0.05s & 0.05 & 0.054 & 0.99 &  1.00 \\
0.0125  &  3321 &  0.17s &0.025 &  0.027 & 0.99 & 0.99 \\
0.00625  &   13041 &  0.58s &0.014 & 0.0154 & 0.82 & 0.83\\
			\bottomrule		
		\end{tabular}
   	\caption{Test 2: Error analysis and order of convergence for Implicit Euler scheme of the TSA with $\ep= \Delta t^2$ and $2$ discrete controls.}
	\label{test2:2contr}
	\end{table}

It is possible to note that the increasing ratio for the nodes is almost equal to $4$, due to a linear growth of the cardinality. Indeed, in the left panel of Figure \ref{fig:transport}, we present the number of nodes for each time level of the tree. The linear growth in the time levels leads to a quadratic cardinality of the pruned tree e.g. $|\mathcal{T}^P|= \overline{N}(\overline{N}+1)/2$. In the middle panel of Figure \ref{fig:transport}, we show the final configuration for the uncontrolled and controlled solution. 
In Table \ref{test2:norms} we compare the $L^\infty$ norm and $L^2$ norm for the three cases studied. It is possible to note that both norms for the controlled solution are closer to the exact one than in the uncontrolled case since the control reduces the diffusivity of the scheme.
 In the right panel we show the optimal control computed by our method. 
We can note that the numerical optimal control oscillates between the two values to counteract the numerical diffusion.

Finally, in Table \ref{test2:5contr} we report the error analysis and the order of convergence with $5$ equidistant discrete controls. If we compare Table \ref{test2:5contr} and Table \ref{test2:2contr}, we can only note a slight improvement on the order of convergence. In the left panel of Figure \ref{fig:transport22} we report the growth of the cardinality fixing $\Delta t=0.00625$ and $5$ controls. We can observe again a linear growth, but in this case the growth rate is equal to $4$. In the right panel of Figure \ref{fig:transport22}, we show the optimal control which exhibits again an oscillating behaviour between two values.

\begin{figure}[htbp]
\centering
       \includegraphics[scale=0.3]{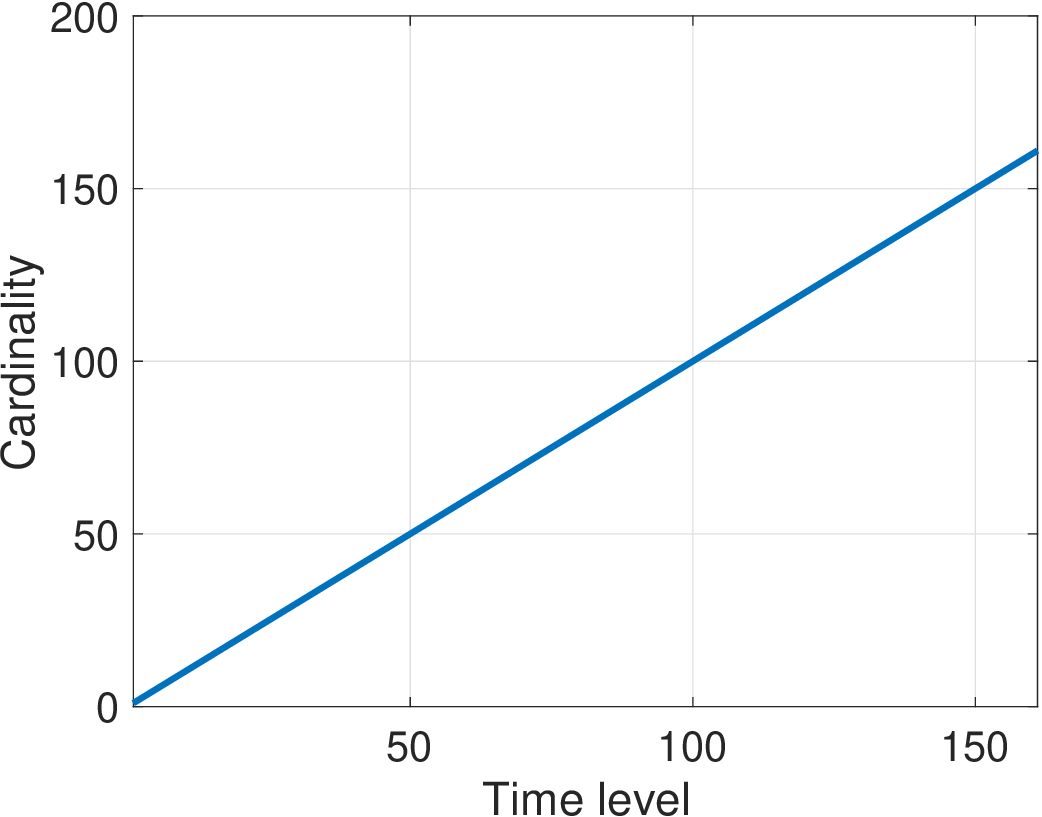}
       \includegraphics[scale=0.3]{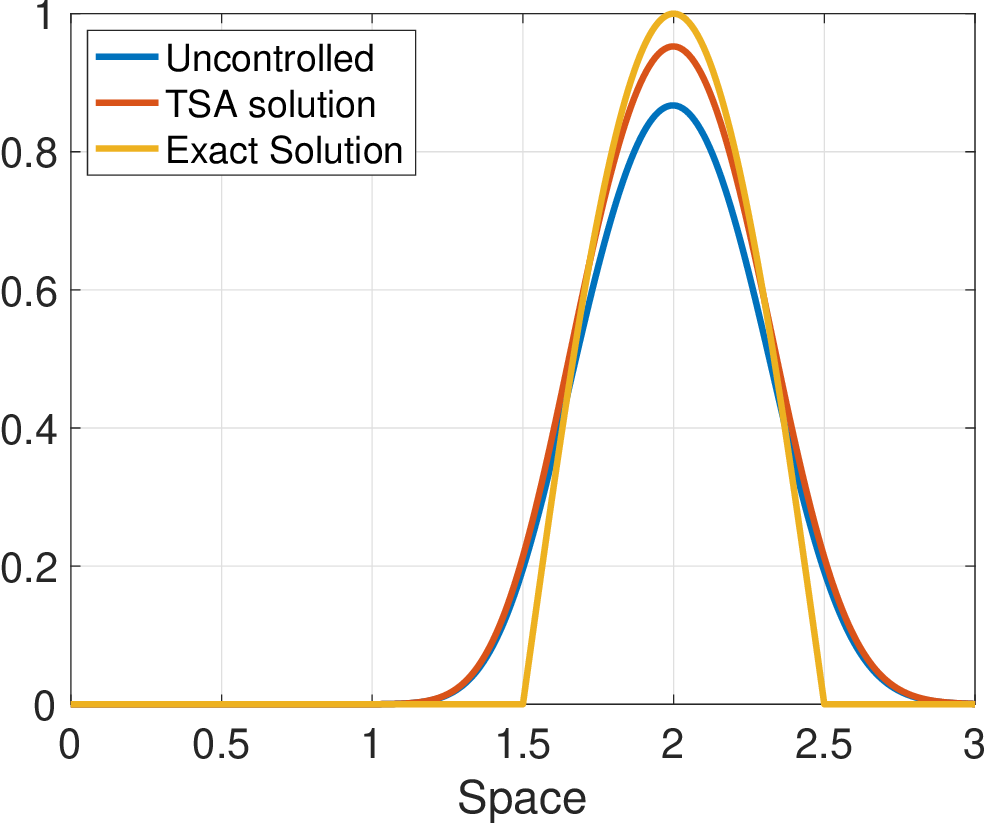} 
        \includegraphics[scale=0.3]{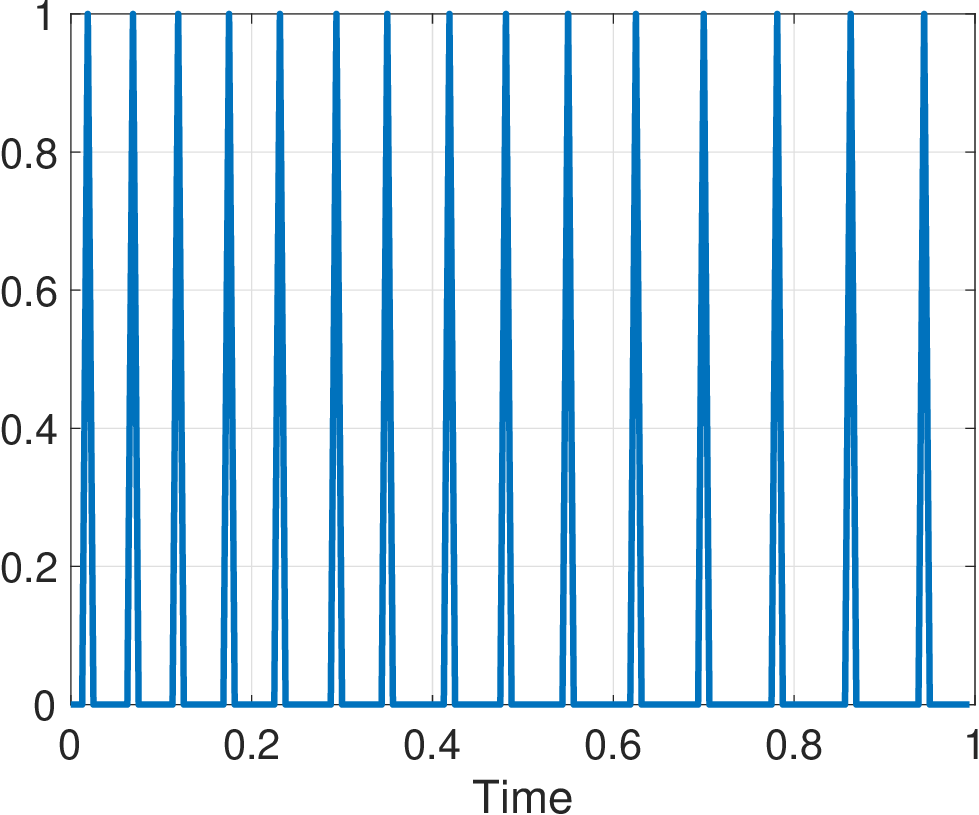} 
       \caption{Test 2: Number of nodes for each time level (left), solution at final time (middle) and optimal control (right) with $2$ discrete controls and $\Delta t=0.00625$ and $\ep=\Delta t^2$.}
       \label{fig:transport}
	\end{figure}
	
		\begin{table}[htbp]
		\centering
		\begin{tabular}{cccccccc}
					\toprule
					 & $\Vert y(T) \Vert_\infty$ & $\Vert y(T) \Vert_2$  \\
					\midrule
Exact & 1  & 0.707   \\
Uncontrolled &   0.867 & 0.636 \\
Controlled & 0.953 & 0.699  \\

			\bottomrule		
		\end{tabular}
   	\caption{Test 2: Comparison of the $L^\infty$ norm and $L^2$ norm for the exact, the uncontrolled and the controlled solutions at final time with $\Delta t=0.00625$.   }
	\label{test2:norms}
	\end{table}
	
	\begin{table}[htbp]
		\centering
		\begin{tabular}{cccccccc}
					\toprule
					 $\Delta t$ & Nodes & CPU & $Err_{2}$ &  $Err_{\infty} $ & $Order_{2}$  & $Order_{\infty}$  \\
					\midrule
 0.05 & 861 &  0.12s  &  0.1 &   0.11     &   & \\
 0.025  & 3321  &  0.39s  & 0.05  &  0.054   & 1   & 1 \\
     0.0125 & 3041 &  1.46s  &  0.025 &   0.027   & 0.99  &  0.99 \\
     0.00625 & 51681 &  5.7s &  0.0142 &   0.0154  &  0.83  &  0.83 \\
			\bottomrule		
		\end{tabular}
   	\caption{Test 2: Error analysis and order of convergence for Implicit Euler scheme of the TSA with $\ep= \Delta t^2$ and $5$ discrete controls.}
	\label{test2:5contr}
	\end{table}
	
	\begin{figure}[htbp]
\centering
       \includegraphics[scale=0.4]{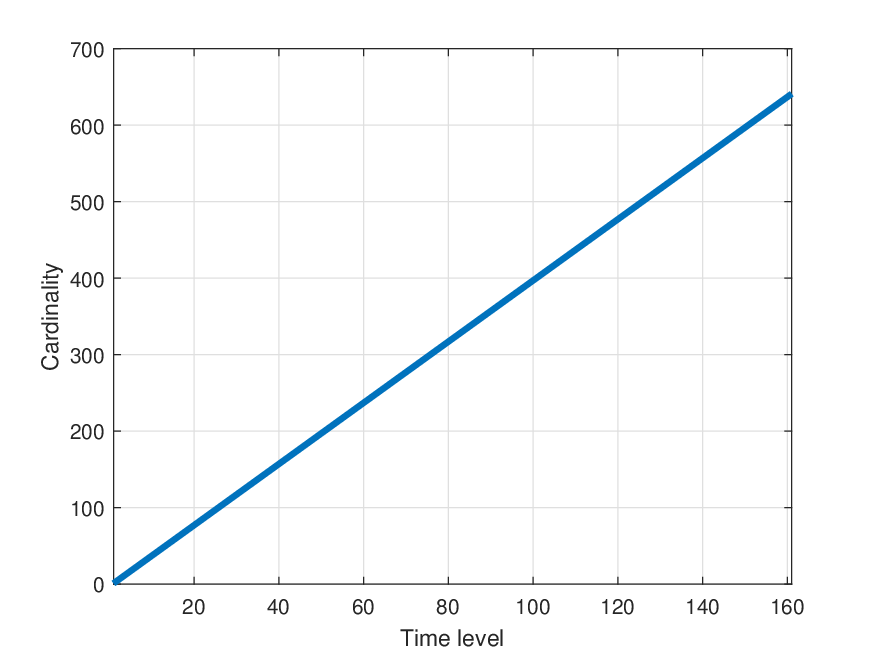}
        \includegraphics[scale=0.4]{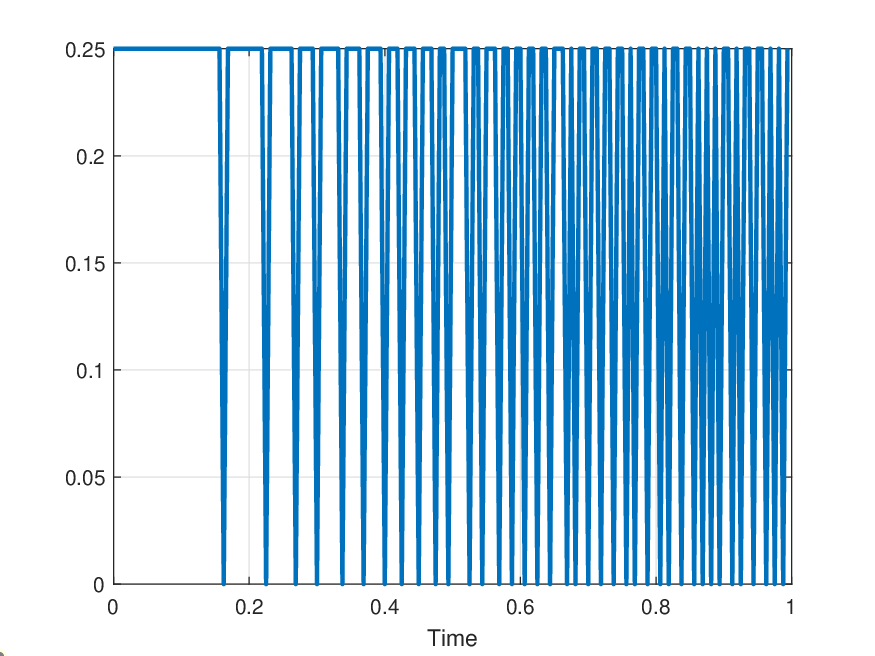} 
       \caption{Test 2: Number of nodes for each time level (left) and optimal control (right) with $5$ discrete controls and $\Delta t=0.00625$ and $\ep=\Delta t^2$.}
       \label{fig:transport22}
	\end{figure}
	
\section{Conclusion and future work}
\label{sec:con}
In this work we have proved error estimates for the TSA presented in \cite{AFS18}. The tree structure algorithm allows us to achieve the same order of convergence of the numerical method used in the time discretization of the dynamics. Our error estimate improves previous existing results on the convergence of the semi-discrete value function adding the semiconcavity assumption. Numerical tests presented in the last section and in \cite{AFS18} confirm the estimate and the relevance of semiconcavity in the approximation.

The cardinality of the tree increases as the number of the control does and the time step size $\Delta t$ decreases, so we need to prune the tree to reduce the complexity and to save in memory allocations and CPU time. The pruning technique is crucial to produce a more efficient algorithm. In particular, we have shown that if the pruning technique has a reasonable tolerance $\ep$, e.g. one order higher of the order of convergence of the numerical method of the ODE, we can achieve the same order of the TSA method without pruning.  

As future work we aim at analyzing the numerical methods for the synthesis of feedback controls based on the tree structure algorithm. 

\bibliographystyle{siamplain}

\end{document}